\documentclass[11pt]{amsart}
\usepackage[leqno]{amsmath}
\usepackage{epsfig}
\pagestyle{plain} \pagenumbering{arabic} \oddsidemargin1cm
\evensidemargin1cm \topmargin1cm \headheight0cm \headsep5mm
\topskip0cm \textheight20.5cm \textwidth15.5cm
\footskip1.5cm

\sloppy
\usepackage{epsfig}
\usepackage{color}
\usepackage{amsmath}
\usepackage{amssymb}
\newtheorem{lemma}{Lemma}
\newtheorem{proposition}{Proposition}
\newtheorem{theorem}{Theorem}

\theoremstyle{definition}

\usepackage{color}
\newcommand{\commentout}[1]{}
\newcommand{\coll}{\;\;\makebox[0pt]{$\bot$}\makebox[0pt]{$\smile$}\;\;}
\usepackage{graphicx}

\begin{document}

\thispagestyle{empty}

\centerline{\Large\bf Nice labeling problem for event structures: a counterexample}

\bigskip

\hspace*{7cm}{\it For Hans-J\"urgen Bandelt on his 60th birthday}

%

\vspace{10mm}

\centerline{{\sc Victor Chepoi} }

\vspace{3mm}

\medskip
\centerline{Laboratoire d'Informatique Fondamentale,}
\centerline{Universit\'e d'Aix-Marseille,}
\centerline{Facult\'e des Sciences de Luminy,} \centerline{F-13288
Marseille Cedex 9, France} \centerline{chepoi@lif.univ-mrs.fr}

\vspace{15mm}
\begin{footnotesize} \noindent {\bf Abstract.} In this note, we present a counterexample to a conjecture
of Rozoy and Thiagarajan from 1991 (called also the nice labeling problem) asserting that any (coherent) event structure
with finite degree admits a labeling with a finite number of labels, or equivalently, that there exists a function
$f: \mathbb{N} \mapsto \mathbb{N}$ such that an event structure with degree $\le n$ admits a labeling with at most $f(n)$
labels. Our counterexample is based on the Burling's construction from 1965 of 3-dimensional box hypergraphs with clique number 2
and arbitrarily large chromatic numbers and the bijection between domains of event structures and median graphs established by
Barth\'elemy and Constantin in 1993.
\end{footnotesize}

\section{Introduction}

Event structures introduced by Nielsen, Plotkin, and Winskel \cite{NiPlWi,Wi,WiNi} is a widely recognized abstract model of concurrent computation. An event
structure is a partially ordered set of the occurrences of actions, called events, together with a conflict relation.
The partial order captures the causal dependency of events. The conflict relation models incompatibility of events so that two events that
are in conflict cannot simultaneously occur  in any state of the computation. Consequently, two events that are neither ordered nor in conflict may
occur concurrently.  Formally, an {\it event structure} is a triple ${\mathcal E}=(E,\le, \smile),$ where

\begin{itemize}
\item{} $E$ is a set of {\it events},
\item{} $\le\subseteq E\times E$ is a partial order of {\it causal dependency},
\item{} $\smile\subseteq E\times E$ is a binary, irreflexive, symmetric relation of {\it conflict},
\item{} $e\smile e'$ and $e'\le e''$ imply $e\smile e''$.
\end{itemize}

\noindent
What we call here an event structure is usually called a coherent event structure or an event structure with a binary conflict. Additionally, the partial order $\le$ in the definition of an event structure is supposed to be {\it finitary,}
i.e., the set $\{ e'\in E: e'\le e\}$ is finite for any $e\in E.$ Two  events $e',e''$ are {\it concurrent} (notation $e'\frown e''$) if they are order-incomparable and they are not in conflict.
Let $e'$ and $e''$ be two elements in conflict. This conflict $e'\smile e''$ is said to be {\it minimal} if there is no element $e\ne e',e''$ such that either $e\le e'$ and $e\smile e''$ or $e\le e''$ and $e\smile e'$. Two elements are {\it independent} \cite{AsBouChaRo} (or {\it orthogonal} \cite{Sa}) if they are either concurrent or in minimal conflict. An {\it independent} set is a subset of $E$ whose elements are pairwise independent. The {\it degree} of an event structure ${\mathcal E}$ is the least upper bound of the sizes of the independent sets.

A labeling of an event structure ${\mathcal E}$ is a map $\lambda$ from $E$ to some alphabet $\Lambda$. The labeling $\lambda$ is a {\it nice labeling} of  $\mathcal E$ if any two independent events have different labels.  Assous, Bouchitt\'e, Charretton, and Rozoy \cite{AsBouChaRo} note that a nice labeling of an event structure ``is equivalent to label the transitions by actions with the following condition: two transitions associated with the same initial state but with two different final states must have two different labels" and that the nice labeling conjecture of \cite{RoTh} formulated below arises when studying the equivalence of three different models of distributed computation: labeled event structures, transitions systems, and distributed monoids.
Nice labeling of event structures was introduced by Rozoy and Thiagarajan \cite{RoTh} in their study of relationships between trace monoids and labeled event structures.
Rozoy and Thiagarajan conjectured that {\it any event structure with finite degree admits a nice labeling with a finite number of labels}.  In a quantitative version
of this conjecture (which can be  considered also for finite event structures) this conjecture can be re-formulated as: {\it there exists a function
$f: \mathbb{N}\mapsto \mathbb{N}$ such that any event structure of degree $\le n$ admits a nice labeling with at most $f(n)$ labels.}

Assous et al. \cite{AsBouChaRo}
proved that the event structures of degree 2 admit   nice labelings with 2 labels and noticed that Dilworth's theorem implies that the conflict-free event structures
of degree $n$ have nice labelings with $n$ labels. They also proved that finding the least number of labels in a nice labeling of a finite
event structure is NP-hard (by a reduction from graph coloring problem) and presented an example of a event structure of degree $n$ requiring more than $n$ labels. Recently,
Santocanale \cite{Sa} proved that all event structures of degree  3 and
with tree-like partial orders  have nice labelings with 3 labels. Both papers \cite{AsBouChaRo,Sa} contain some other results and reformulations of the nice labeling problem.
In particular,  Santocanale \cite{Sa} reformulated a nice labeling of an event structure $\mathcal E$ as a coloring
problem of the {\it orthogonality graph} ${\mathcal G}({\mathcal E})$ of $\mathcal E$: the vertices of ${\mathcal G}({\mathcal E})$ are the events of
$\mathcal E$ and two events are adjacent in
${\mathcal G}({\mathcal E})$ if and only if they are orthogonal (i.e., independent). Then the independent sets of events become the cliques of  ${\mathcal G}({\mathcal E})$,
the degree  of $\mathcal E$ becomes the  clique number of ${\mathcal G}({\mathcal E}),$ and the colorings of ${\mathcal G}({\mathcal E})$ are in bijection with the
nice labelings of $\mathcal E$.

In this note, we show that the conjecture of Rozoy and Thiagarajan is false already for event structures of degree 5. For this, we will use a more geometric and combinatorial
view on event structures. Namely, we will use the bijections between domains of event structures and median graphs established by
Barth\'elemy and Constantin \cite{BaCo} and between median graphs and CAT(0) cubical complexes established in \cite{Ch_CAT,Ro}. Together with those ingredients,  our counterexample
is based on  the Burling's construction \cite{Bu,Gy} of 3-dimensional box hypergraphs with clique number 2
and arbitrarily large chromatic numbers.

\section{Domains of event structures}

In view of our geometric approach to event structures, it will be more convenient to reformulate and investigate the nice labeling conjecture in terms of domains, which we recall now. The domain of an event structure $\mathcal E$ consists of all computations states, called configurations. Each computation state is a subset of events subject to the constrains that no two conflicting events can occur together in the same computation and
if an event occurred in a computation then all events on which it causally depends have occurred too. Formally, the set $\mathcal D=\mathcal D({\mathcal E})$ of {\it configurations} of an event structure ${\mathcal E}=(E,\le,\smile)$ consists of those subsets $C\subseteq E$ which are {\it conflict-free} ($e,e'\in C$ implies that $e,e'$ are not in conflict) and {\it downward-closed} ($e\in C$ and $e'\le e$ implies that $e'\in C$)  \cite{WiNi}. The {\it domain} of an event structure is the set $\mathcal D({\mathcal E})$ ordered by inclusion: if $C,C'\in {\mathcal D}({\mathcal E})$ and $C'\subseteq C,$ then $C'$ can be viewed as a subbehaviour of $C$. Thus the partial order $\subseteq$ on $\mathcal D({\mathcal E})$ expresses the progress in computation \cite{WiNi}.
As is noticed in \cite{Sa}, $(C',C)$ is a (directed) edge of the Hasse diagram of $({\mathcal D}({\mathcal E}),\subseteq)$ if and only if $C=C'\cup \{ e\}$ for an event $e\in E\setminus C,$ i.e., citing \cite{WiNi},
``events manifest themselves as atomic jumps from one configuration to another''. Then a nice labeling of the event structure ${\mathcal E}$ can be reformulated as a coloring of the directed edges of the Hasse diagram of its
domain ${\mathcal D}({\mathcal E})$ subject to the following local conditions \cite{Sa}:

\medskip\noindent
{\bf Determinism:} transitions outgoing from the same state have different colors, i.e., the  edges outgoing from the same vertex of ${\mathcal D}({\mathcal E})$ have different colors;

\noindent
{\bf Concurrency:} the opposite edges of each square of the Hasse diagram of ${\mathcal D}({\mathcal E})$ are colored in the same color.

\medskip
 As noticed in \cite{Sa}, there exists a bijection between such  edge-colorings of the Hasse diagram of ${\mathcal D}({\mathcal E})$ and nice labelings of $\mathcal E$ (i.e., colorings of the orthogonality graph  ${\mathcal G}({\mathcal E})$ of  $\mathcal E$). Moreover, it is shown in Lemma 2.11 of \cite{Sa} that $\{ e_1,\ldots,e_n\}$ is a clique of  ${\mathcal G}({\mathcal E})$ if and only if there exists a configuration $C\in {\mathcal D}({\mathcal E})$ such that for all $i=1,\ldots,n,$ $C\cup\{ e_i\}$ are configurations and $(C,C\cup\{ e_i\})$ are directed edges of the Hasse diagram of
${\mathcal D}({\mathcal E})$. According to this result, the degree of an event structure $\mathcal E$ (alias the clique-number of the orthogonality graph of $\mathcal E$)
equals to the maximum out-degree of a vertex in the Hasse diagram of  ${\mathcal D}({\mathcal E}).$

\section{Domains, median graphs, and CAT(0) cubical complexes}

We recall now the bijections between domains of event structures
and  median graphs established in \cite{BaCo} and between median
graphs and 1-skeletons of CAT(0) cubical
complexes established in \cite{Ch_CAT,Ro}. This will allow us to reformulate the nice labeling problem in truly geometric terms.
Median graphs and
related median structures (median algebras and
CAT(0) cubical complexes) have many nice properties and admit numerous
characterizations. These structures have been investigated in
several  contexts by quite a number of authors for more than half
a century.  We present here only a brief account of the
characteristic properties of median structures; for more detailed
information, the interested reader can consult the surveys
\cite{BaCh_survey, BaHe} and the book \cite{vdV} (see also the papers \cite{AbGh,GhPe} in which
CAT(0) cubical complexes are viewed as state complexes associated to metamorphic robots).

Let $G=(V,E)$ be simple, connected,
without loops or multiple edges, but not necessarily finite graph. The {\it distance}
$d(u,v)$ between two vertices $u$ and $v$ is the length of
a shortest $(u,v)$-path, and the {\it interval} $I(u,v)$ between $u$
and $v$ consists of all vertices on shortest $(u,v)$--paths, that
is, of all vertices (metrically) {\it between} $u$ and $v$:
$$I(u,v):=\{ x\in V: d(u,x)+d(x,v)=d(u,v)\}.$$
An induced subgraph of $G$ (or the corresponding vertex set)
is called {\it convex} if it includes the interval of $G$ between
any of its vertices.   A graph
$G=(V,E)$ is {\it isometrically embeddable} into a graph $H=(W,F)$
if there exists a mapping $\varphi : V\rightarrow W$ such that
$d_H(\varphi (u),\varphi (v))=d_G(u,v)$ for all vertices $u,v\in V$.

A graph $G$ is called {\it median} if the interval intersection
$I(x,y)\cap I(y,z)\cap I(z,x)$ is a singleton for each triplet $x,y,z$ of vertices. Median graphs are bipartite.
Basic examples of median graphs are trees (which are successive point
amalgams of $K_2$), hypercubes (which are Cartesian powers of
$K_2$), rectangular grids (which are Cartesian products of finite or infinite paths), and
covering graphs of distributive lattices.  With any vertex $v$ of a median
graph $G=(V,E)$ is associated a canonical partial order $\le_v$ defined by setting $x\le_v y$
if and only if $x\in I(v,y);$ $v$ is called the basepoint of $\le_v$. Since $G$ is bipartite,
the Hasse diagram $G_v$ of the partial order $(V,\le_v)$ is the graph $G$ in which any edge
$xy$ is directed from $x$ to $y$ if and only if the inequality $d(x,v)<d(y,v)$ holds. We call
$G_v$ a {\it pointed median graph}.  Theorems 2.2 and 2.3 of Barth\'elemy and Constantin \cite{BaCo} establish
the following bijection between event structures and pointed median graphs
(in \cite{BaCo},  event structures are called sites):

\begin{theorem}\cite{BaCo} \label{median_domain} The (undirected) covering graph
of the domain $({\mathcal D}({\mathcal E}),\subseteq)$ of  any event structure ${\mathcal E}=(E,\le, \smile)$ is a
median graph. Conversely, for any median graph $G$ and
any basepoint $v$ of $G,$  the pointed median graph $G_v$ is isomorphic to the Hasse diagram of
a domain of an event structure.
\end{theorem}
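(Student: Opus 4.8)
The plan is to prove the two implications separately, both times exploiting the coordinatization of median graphs by Hamming cubes.

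\emph{The covering graph of $(\mathcal D(\mathcal E),\subseteq)$ is median.} Fix $\mathcal E=(E,\le,\smile)$ and identify a configuration $C$ with its indicator in the cube $2^{E}$. I would first check that this identification is an isometry of the covering graph into $2^{E}$, i.e.\ $d(C,C')=|C\triangle C'|$. The inequality $\ge$ is automatic, since each Hasse edge $X\lessdot X\cup\{e\}$ flips one coordinate. For $\le$, descend from $C$ to $C\cap C'$ by deleting the members of $C\setminus C'$ one at a time, always removing an element that is maximal (in the causal order) among those still present: an element above it in the current set either lies in $C\setminus C'$, contradicting maximality, or lies in $C'$, in which case so does the removed element by downward-closure of $C'$ --- a contradiction; so the intermediate sets stay downward-closed, and they stay conflict-free as subsets of $C$. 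Then ascend from $C\cap C'$ to $C'$ symmetrically, adding the members of $C'\setminus C$ along a linear extension. Having the isometric embedding, it remains to see that $\mathcal D(\mathcal E)$ is closed under coordinatewise medians: for configurations $C_{1},C_{2},C_{3}$ the coordinatewise median is $m=(C_{1}\cap C_{2})\cup(C_{2}\cap C_{3})\cup(C_{3}\cap C_{1})$, which is downward-closed as a union of intersections of downward-closed sets, and is conflict-free by pigeonhole: if $e\smile e'$ with $e,e'\in m$, then each of $e,e'$ lies in at least two of the $C_{i}$, hence some $C_{i}$ contains both, contradicting that $C_{i}$ is conflict-free. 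So $m\in\mathcal D(\mathcal E)$; being the coordinatewise median, it is then the unique vertex of $I(C_{1},C_{2})\cap I(C_{2},C_{3})\cap I(C_{3},C_{1})$, and the covering graph is median.

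\emph{Every pointed median graph is the Hasse diagram of a domain.} Given a median graph $G$ and a basepoint $v$, take as events the Djokovi\'c--Winkler $\Theta$-classes of $G$; each class $c$ splits $V$ into two convex halfspaces, and I orient the split so that $c^{-}$ contains $v$ and $c^{+}$ does not. Define $c\le d$ iff $d^{+}\subseteq c^{+}$ (``crossing $d$ forces crossing $c$'') and $c\smile d$ iff $c\ne d$ and $c^{+}\cap d^{+}=\varnothing$. One checks that $\le$ is a partial order and $\smile$ an irreflexive symmetric relation, that $c\smile d$ and $d\le f$ give $c^{+}\cap f^{+}\subseteq c^{+}\cap d^{+}=\varnothing$, hence $c\smile f$, and that $\le$ is finitary: if $d\le c$ and $xy$ is an edge in class $c$ with $y\in c^{+}$, then $d$ separates $v$ from $y$, and only finitely many (namely $d(v,y)$) classes do. Now consider the coordinate map $\sigma(u)=\{c:u\in c^{+}\}$. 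Straight from the definitions, every $\sigma(u)$ is downward-closed and conflict-free, i.e.\ a configuration; and standard facts about partial cubes give that $\sigma$ is injective, that $u\le_{v}w$ iff $\sigma(u)\subseteq\sigma(w)$, and that $uw$ is an edge of $G_{v}$ (oriented away from $v$) iff $\sigma(w)=\sigma(u)\cup\{c\}$ for the class $c$ of $uw$ --- which matches exactly the description of the Hasse diagram of $(\mathcal D(\mathcal E),\subseteq)$. To finish one shows $\sigma$ is onto $\mathcal D(\mathcal E)$: for a configuration $C$ the convex sets $\{c^{+}:c\in C\}\cup\{c^{-}:c\notin C\}$ pairwise intersect (two ``$+$'' sides by conflict-freeness of $C$; two ``$-$'' sides because both contain $v$; a pair $c^{+},d^{-}$ with $c\in C$, $d\notin C$, because $c^{+}\subseteq d^{+}$ would give $d\le c$ and hence $d\in C$), so by the Helly property for convex subsets of a median graph they share a vertex $u$, which satisfies $\sigma(u)=C$. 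Then $\sigma$ is the desired isomorphism.

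I expect the converse, and within it the surjectivity of $\sigma$, to be the main obstacle: it relies on the Helly property for convex sets of median graphs, together with the bookkeeping by which orienting all splits away from $v$ converts ``downward-closed'' into ``every prerequisite class already crossed'' and ``conflict-free'' into ``the selected halfspaces are not forced to be disjoint''. For infinite $G$ one must also be careful, since the naive Helly property can fail for infinite families of convex sets; there one writes a configuration as an increasing union of finite configurations, using finitariness of $\le$, and passes to the limit (equivalently, one runs the argument at the level of finite configurations). The forward direction is comparatively routine once the symmetric-difference distance formula is in hand, the only real point being the pigeonhole step for conflict-freeness of the median configuration.
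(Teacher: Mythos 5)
The paper does not actually prove this statement: it is imported wholesale from Barth\'elemy and Constantin \cite{BaCo}, and the text only recalls, for later use, how the event structure ${\mathcal E}_v$ of the converse direction is built from the $\Theta$-classes. Your proof is therefore necessarily ``different'' in that it supplies an argument where the paper supplies a citation; measured against the construction the paper does describe, your event structure is exactly ${\mathcal E}_v$ in disguise: for two distinct classes, $d^{+}\subseteq c^{+}$ is equivalent to ``$c$ is compatible with $d$ and all edges of $d$ lie in the halfspace $c^{+}=B_c$ not containing $v$'', i.e.\ to ``$c$ separates $v$ from $d$'', and $c^{+}\cap d^{+}=\varnothing$ is equivalent to ``compatible and neither separates $v$ from the other''. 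Both halves of your argument (isometric embedding into $2^{E}$ plus closure under coordinatewise medians; the coordinate map $\sigma$ plus the Helly property for the converse) are the standard route and the individual steps check out, including the maximal-element descent for the distance formula and the pigeonhole argument for conflict-freeness of the median.

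The one genuine wrinkle is the infinite case, which you flag but do not quite resolve. With the paper's definition, $\mathcal D(\mathcal E)$ may contain infinite configurations; on an infinite ray pointed at its endpoint, the full configuration $E$ is an isolated vertex of the Hasse diagram, so $\sigma$ is not onto $\mathcal D(\mathcal E)$ and ``passing to the limit'' cannot make it so --- the limit configuration simply has no preimage. Likewise the forward direction needs connectivity of the covering graph, which fails once infinite configurations are admitted. The correct repair is the second alternative you mention: restrict throughout to \emph{finite} configurations (equivalently, to the finitary part of the domain), for which $\sigma(u)$ has exactly $d(v,u)$ elements, finite Helly applies, and both directions go through verbatim. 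This restriction is implicit in \cite{BaCo} and is harmless for the use the paper makes of the theorem, but as written your surjectivity step is stated for a domain on which it is false.
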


We only recall how to define the event structure occurring in the second part of this theorem. For this,
we will introduce  some notions which will be also used in the description of our counterexample.
Median graphs are isometric subgraphs of
hypercubes and Cartesian product of trees \cite{BaVdV,Mu}.  The isometric embedding of a median graph $G$ into a (smallest)
hypercube coincides with the so-called canonical embedding, which is
determined by the Djokovi\'c-Winkler relation $\Theta$ on the edge
set of $G:$ two edges $xy$ and $zw$ are $\Theta$-related exactly
when
$$d_G(x,z)+d_G(y,w)\ne d_G(x,w)+d_G(y,z).$$ For a
median graph this relation is transitive and hence an equivalence
relation.  It is the transitive closure of the ``opposite" relation
of edges on 4-cycles: in fact, any two $\Theta$-related edges can be
connected by a ladder (viz., the Cartesian product of a path with
$K_2$), and the block of all edges $\Theta$-related to some edge
$xy$ constitute a cutset $\Theta(xy)$ of the median graph, which
determines one factor of the canonical hypercube \cite{Mu1,Mu}. The
cutset $\Theta(xy)$ defines a convex split $\sigma(xy)=\{
W(x,y),W(y,x)\}$ of $G$ \cite{Mu}, where $W(x,y)=\{ z\in V:
d(z,x)<d(z,y)\}$ and $W(y,x)=V-W(x,y)$ (we will call the complementary convex sets $W(x,y)$
and $W(y,x)$ {\it halfspaces}; they are not only convex but also gated, see the
definition below). Conversely, for every convex
split of a median graph $G$ there exists at least one edge $xy$ such
that $\{ W(x,y),W(y,x)\}$ is the given split. We will denote by
$\{ \Theta_i: i\in I\}$ the equivalence classes of the relation
$\Theta$ (in \cite{BaCo},
they were called parallelism classes).

Suppose that $v$ is an arbitrary but fixed basepoint  of a median graph $G.$
For an equivalence class $\Theta_i, i\in I,$ we will denote by
$\sigma_i=\{ A_i,B_i\}$ the associated convex split, and suppose without loss of generality that  $v\in A_i.$  Two equivalence classes $\Theta_i$ and $\Theta_j$
are said to be {\it incompatible} or {\it crossing} if there exists a 4-cycle $C$ of $G$ with two opposite
edges in $\Theta_i$ and two other opposite edges in $\Theta_j$ ($\Theta_i$ and $\Theta_j$  are called \emph{compatible} otherwise).
An equivalence class $\Theta_i$  {\it separates} the basepoint $v$ from the
equivalence class $\Theta_j$ if
$\Theta_i$ and $\Theta_j$ are compatible and all edges of $\Theta_j$ belong to $B_i.$ The event structure ${\mathcal E}_v=(E,\le, \smile)$ associated with a pointed median graph $G_v$ is defined in the following way. $E$ is the set
$\{ \Theta_i, i\in I\}$ of the equivalence classes of $\Theta$. The causal dependency is defined by setting $\Theta_i\le \Theta_j$ if and only if $\Theta_i=\Theta_j$
or $\Theta_i$ separates $v$ from $\Theta_j$. Finally, the conflict relation is defined by setting $\Theta_i\smile\Theta_j$ if and only if $\Theta_i$ and $\Theta_j$
are compatible, $\Theta_i$ does not separate $v$ from $\Theta_j$ and $\Theta_j$ does not separates $v$ from $\Theta_i.$ Theorem 2.3 of \cite{BaCo} shows that $G_v$ is indeed
the Hasse diagram of the domain ${\mathcal D}({\mathcal E}_v)$ of the event structure ${\mathcal E}_v$.

In our counterexample we will use the following constructive characterization of finite median graphs.
A subset $W$ of $V$
or the subgraph $H$ of $G=(V,E)$ induced by $W$ is called
{\it gated} (in
$G$) if for every vertex $x$ outside $H$ there exists a vertex $x'$
(the {\it gate} of $x$) in $H$ such that each vertex $y$ of $H$ is
connected with $x$ by a shortest path passing through the gate $x'.$ In general, any gated set is convex.
In median graphs, all convex sets are gated.
A graph $G$ is a {\it gated amalgam} of two
graphs $G_1$ and $G_2$ if $G_1$ and $G_2$ constitute  two
intersecting gated subgraphs of $G$ whose union is all of $G.$ Equivalently, $G$ is a gated amalgam of
$G_1$ and $G_2$  if the intersection $G_0$ of $G_1$ and $G_2$ in $G$ is a gated subgraph of $G_1$ and $G_2$ and $G_0$
separates in $G$
any vertex of $G_1\setminus G_2$ from any vertex of $G_2\setminus G_1.$

\begin{theorem} \cite{Is,vdV1} \label{amalgam} The gated amalgam of two median graphs is a median graph.
Moreover, every finite median graph $G$ can be obtained by
successive applications of gated amalgamations from hypercubes.
\end{theorem}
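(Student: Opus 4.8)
The statement has two parts, and the plan is to prove them in turn, using freely that in a median graph every convex subgraph is gated, isometrically embedded, and again median (the last because the intervals of a convex subgraph coincide with the corresponding intervals of the whole graph).

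\emph{Part 1: the gated amalgam $G$ of median graphs $G_1,G_2$ is median.} Being gated in $G$, the subgraphs $G_1,G_2$ are convex, hence isometric, so $I_G(a,b)=I_{G_k}(a,b)\subseteq V(G_k)$ whenever $a,b\in V(G_k)$. Given $x,y,z\in V(G)$, by the pigeonhole principle two of them, say $x,y$, lie in one side, say $V(G_1)$. If $z\in V(G_1)$ as well, then the median of $x,y,z$ computed in $G_1$ lies in each of the three $G$-intervals, and it is the unique such vertex, since any $G$-median lies in $I_G(x,y)\subseteq V(G_1)$ and is therefore the ($G_1$-)median of $x,y,z$. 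If $z\notin V(G_1)$, let $z_0$ be the gate of $z$ in $G_1$, so that $d_G(u,z)=d_G(u,z_0)+d_G(z_0,z)$ for every $u\in V(G_1)$. I would then check that $m:=m_{G_1}(x,y,z_0)$ is the unique median of $x,y,z$ in $G$: the gate identity turns $m\in I_{G_1}(x,z_0)$ and $m\in I_{G_1}(y,z_0)$ into $m\in I_G(x,z)$ and $m\in I_G(y,z)$, giving existence; and any $G$-median $m'$ again lies in $I_G(x,y)\subseteq V(G_1)$, whence the gate identity forces $m'\in I_{G_1}(x,z_0)\cap I_{G_1}(y,z_0)\cap I_{G_1}(x,y)=\{m\}$. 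This part is pure ``gate arithmetic'' and presents no real difficulty.

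\emph{Part 2: every finite median graph $G$ is an iterated gated amalgam of hypercubes.} The plan is induction on $|V(G)|$. If $G$ has no edge it is $K_1$, a $0$-cube. Otherwise fix a $\Theta$-class $\Theta_i$ with convex split $\{A,B\}$ and let $\partial A$ (resp. $\partial B$) be the set of endpoints in $A$ (resp. $B$) of the edges of $\Theta_i$; these edges form a perfect matching between $\partial A$ and $\partial B$. The one substantial ingredient, which I would isolate as a lemma, is that $\partial A$ (and likewise $\partial B$) is a \emph{convex} subgraph of $G$. I would prove this by a short computation in the coordinates given by the canonical hypercube embedding: if $p,q\in\partial A$ have matching partners $p',q'\in\partial B$ and $r\in I_G(p,q)$, then $d_G(p,q)=d_G(p',q')$, $d_G(p',r)=d_G(p,r)+1$ and $d_G(q',r)=d_G(q,r)+1$; combining these with $r\in I_G(p,q)$ and $r'\in I_G(p',q')\cap I_G(p',r)\cap I_G(q',r)$ for $r':=m(p',q',r)\in B$ yields $d_G(r,r')=1$, so $r\in\partial A$. (Alternatively this may be cited from \cite{Mu,vdV,BaCh_survey}.)

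Suppose first that for some $\Theta$-class the split satisfies $A\neq\partial A$ (the case $B\neq\partial B$ is symmetric). I would set $G_1:=G[A]$ and $G_2:=G[B\cup\partial A]$. Then $V(G_1)\cup V(G_2)=V(G)$, and the only $G$-edges between $A$ and $B$, being $\Theta_i$-edges, join $\partial A$ to $\partial B$ and hence lie in $G_2$, so $G=G_1\cup G_2$. Using convexity of $A$, of $B$ and of $\partial A$, together with the fact that a shortest path crosses each $\Theta$-class at most once, one checks that $A$, $B\cup\partial A$ and $\partial A$ are all convex in $G$; therefore $G_1,G_2$ are median graphs and $G_0:=G_1\cap G_2=G[\partial A]$ is gated in each. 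Finally $\partial A$ separates $V(G_1)\setminus V(G_2)=A\setminus\partial A$ from $V(G_2)\setminus V(G_1)=B$, since any $A$–$B$ path meets $\partial A$ at its last vertex before crossing $\Theta_i$. Thus $G$ is the gated amalgam of $G_1$ and $G_2$; as $B\neq\emptyset$ and $A\setminus\partial A\neq\emptyset$ both are \emph{proper} induced subgraphs, so the induction hypothesis applies to each and assembles into a decomposition of $G$. It remains to handle the case in which \emph{every} $\Theta$-class has $A=\partial A$ and $B=\partial B$: here $G$ is a hypercube, the base of the induction. Indeed, fixing any vertex $v$, for each of the $|I|$ classes $v$ lies in a side equal to its own boundary and hence has a neighbour across that class; since distinct edges at a vertex lie in distinct $\Theta$-classes, $\deg v=|I|$ for every $v$, so $G$ is a connected $|I|$-regular graph embedding isometrically, hence as an induced subgraph, into $Q_{|I|}$ via its canonical $\Theta$-embedding, and a proper induced subgraph of the connected graph $Q_{|I|}$ has a vertex of degree $<|I|$; thus $G=Q_{|I|}$. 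The main obstacle throughout is the convexity of halfspace boundaries used in Part 2 (and, more minor, the verification that $B\cup\partial A$ is convex); everything else is pigeonhole and gate arithmetic.
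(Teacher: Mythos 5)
The paper offers no proof of this theorem: it is quoted as a known result of Isbell and van de Vel, so there is no in-paper argument to compare yours against, and your proposal must be judged on its own. It is essentially the standard proof and I believe it is correct. Part 1 is sound gate arithmetic: projecting the outside point $z$ to its gate $z_0$ in $G_1$ and checking that $m_{G_1}(x,y,z_0)$ is the unique $G$-median is exactly the classical argument, and your uniqueness step (any $G$-median lies in $I_G(x,y)\subseteq V(G_1)$ and the gate identity converts it into a $G_1$-median of $x,y,z_0$) is complete. In Part 2 the boundary lemma is proved by a correct median computation ($r'=m(p',q',r)$ lies in $B$ by convexity and the three distance identities force $d(r,r')=1$), and the dichotomy --- either some class has $A\neq\partial A$, yielding a gated amalgam of the two proper median subgraphs $G[A]$ and $G[B\cup\partial A]$ along $G[\partial A]$, or every side equals its boundary, whence $G$ is $|I|$-regular and must be all of $Q_{|I|}$ --- is the standard decomposition. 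Two points deserve a little more care than you give them, though neither is a gap. First, the convexity of $B\cup\partial A$ is not a formal consequence of the convexity of $A$, $B$ and $\partial A$: the nontrivial case is $u\in\partial A$, $v\in B$, $w\in I(u,v)\cap A$, and it needs a second instance of the same median computation (with $u'$ the $\Theta_i$-partner of $u$, the median $m(u',v,w)$ lies in $B$ and is at distance $1$ from $w$, so $w\in\partial A$); calling this ``more minor'' undersells it, but it does go through. Second, the inductive assembly tacitly uses that a gated amalgam of two graphs, each obtainable from hypercubes by successive gated amalgamations, is itself so obtainable; under the usual reading of the theorem as producing a decomposition tree with hypercube leaves this is immediate, but it is worth one sentence.
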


Now we recall the close relationship between the median graphs and CAT(0)
cubical complexes.  We believe that it is worth putting together event structures, median graphs, and CAT(0)
cubical complexes because some problems similar to the nice labeling problem
have been independently formulated in geometric and combinatorial  settings.
Notice also that a result similar to the result of Barth\'elemy and Constantin \cite{BaCo}
has been rediscovered recently in \cite{ArOwSu} in the context of CAT(0) cubical complexes.
Finally, it may happen that combinatorial, structural, algebraic, geometrical, and group theoretical
results established for median structures or CAT(0) cubical complexes can be useful for the
investigation of  event structures.

A {\it cubical complex} $\mathcal K$ is a  set of solid cubes of any
dimensions which is closed under taking subcubes and nonempty
intersections.  For a complex $\mathcal K$ denote by
$V({\mathcal K})$ and $E({\mathcal K})$ the {\it vertex set} and the
{\it edge set} of ${\mathcal K},$ namely, the set of all
0-dimensional and 1-dimensional cubes of ${\mathcal K}.$ The pair
$G({\mathcal K})=(V({\mathcal K}),E({\mathcal K}))$ is called the {\it (underlying)
graph} or the {\it 1-skeleton} of ${\mathcal K}.$ Conversely, for a graph $G$ one can derive  a cubical complex
${\mathcal K}(G)$ by replacing all graphic cubes of $G$ by solid cubes.
The cubical complex ${\mathcal K}(G)$ associated with a median
graph $G$ is called a {\it median cubical complex}.  If instead of a
solid cube, we replace each graphic cube of $G$ by an
axis-parallel box (i.e., if instead of length 1 we
take length $l_i$ for all edges from the same equivalence
class $\Theta_i$), then we will get a {\it median box complex}.  Median
cubical and median box
complexes endowed with the intrinsic $l_1$-metric are median metric
spaces  (i.e., every triplet of points has a unique median) and therefore are
$l_1$-subspaces \cite{vdV}.  Finally, if we impose the intrinsic
$l_2$-metric on a median cubical or box complex, then we obtain a metric space
with global non-positive curvature.

A {\it geodesic triangle} $\Delta=\Delta (x_1,x_2,x_3)$ in a geodesic
metric space $(X,d)$ consists of three points in $X$ (the vertices
of $\Delta$) and a geodesic  between each pair of vertices (the
sides of $\Delta$). A {\it comparison triangle} for $\Delta
(x_1,x_2,x_3)$ is a triangle $\Delta (x'_1,x'_2,x'_3)$ in the
Euclidean plane  ${\mathbb E}^2$ such that $d_{{\mathbb
E}^2}(x'_i,x'_j)=d(x_i,x_j)$ for $i,j\in \{ 1,2,3\}.$ A geodesic
metric space $(X,d)$ is defined to be a {\it CAT(0) space}
\cite{Gr} if all geodesic triangles $\Delta (x_1,x_2,x_3)$ of $X$
satisfy the comparison axiom of Cartan--Alexandrov--Toponogov:
{\it If $y$ is a point on the side of $\Delta(x_1,x_2,x_3)$ with
vertices $x_1$ and $x_2$ and $y'$ is the unique point on the line
segment $[x'_1,x'_2]$ of the comparison triangle
$\Delta(x'_1,x'_2,x'_3)$ such that $d_{{\mathbb E}^2}(x'_i,y')=
d(x_i,y)$ for $i=1,2,$ then $d(x_3,y)\le d_{{\mathbb
E}^2}(x'_3,y').$} CAT(0) spaces can be characterized  in several different  natural ways, in particular,
a geodesic metric space $(X,d)$ is CAT(0) if and only if
any two points of this space can be joined by a unique geodesic.
Several classes of
CAT(0) complexes can be characterized
combinatorially, and the characterization of cubical CAT(0)
complexes given  by M. Gromov is especially nice:

\begin{theorem}  \cite{Gr} A cubical or box complex
${\mathcal K}$ with the $l_2$-metric is CAT(0) if and
only if ${\mathcal K}$ is simply connected and
whenever three $(k + 2)$-cubes of ${\mathcal
K}$ share a common $k$-cube and pairwise share common $(k
+1)$-cubes, they are contained in a $(k+3)$--cube of ${\mathcal
K}.$
\end{theorem}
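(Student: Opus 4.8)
The plan is to deduce the theorem from the standard local--to--global theory of non-positive curvature, pushing all of the content into a combinatorial condition on vertex links. First I would invoke the metric Cartan--Hadamard theorem: a complete, connected, geodesic metric space is CAT(0) precisely when it is simply connected and locally CAT(0) (non-positively curved). The complex ${\mathcal K}$ with its intrinsic $l_2$-metric is a complete geodesic space under mild hypotheses (e.g.\ finite dimension, which holds in every situation relevant here), so the theorem applies. Since CAT(0) spaces are uniquely geodesic, hence contractible and in particular simply connected, the ``only if'' part comes down to showing that non-positive curvature forces the cube condition, and the ``if'' part to showing that simple connectivity together with the cube condition forces non-positive curvature. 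Thus it suffices to prove that \emph{the cube condition is equivalent to non-positive curvature of ${\mathcal K}$}.

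Second I would pass to vertex links. A neighbourhood of a point $p$ in the interior of a $j$-dimensional face is isometric to a neighbourhood of the apex of the Euclidean cone over the link $\mathrm{Lk}(p)$, so by Gromov's link lemma \cite{Gr} local CAT(0)-ness at $p$ is equivalent to $\mathrm{Lk}(p)$ being a CAT(1) piecewise spherical complex. Crucially, the corners of an axis-parallel box are orthogonal just as in a cube: the side lengths $l_i$ alter distances but not a single face angle or dihedral angle, so every $\mathrm{Lk}(p)$ is assembled from \emph{all-right} spherical simplices regardless of the $l_i$, and the box case is combinatorially identical to the cube case. When $p$ lies in the interior of a $j$-face, $\mathrm{Lk}(p)$ is the spherical join of a round $(j-1)$-sphere with the link in ${\mathcal K}$ of that face; since spherical joins of CAT(1) spaces are CAT(1), it is enough to test the vertices. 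Hence ${\mathcal K}$ is non-positively curved if and only if $\mathrm{Lk}(v)$ is CAT(1) for every vertex $v$.

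The crux is Gromov's lemma that an all-right piecewise spherical simplicial complex is CAT(1) if and only if it is \emph{flag} (every clique of its $1$-skeleton spans a simplex), which I would prove by induction on dimension. If the complex is flag, so is every vertex link, and vertex links of an all-right spherical complex are themselves all-right spherical of one dimension lower; by induction these links are CAT(1), whence the complex is locally CAT(1), and one upgrades this to the global CAT(1) property by checking there is no closed local geodesic of length $<2\pi$, analysing how a local geodesic passes through the strata. Conversely, an empty triangle $\{a,b,c\}$ (pairwise joined by edges, spanning no $2$-simplex) carries the closed path $a\to b\to c\to a$, which is a local geodesic because non-adjacent vertices of any vertex link lie at distance $\ge\pi$, of total length $3\cdot\tfrac{\pi}{2}<2\pi$, obstructing CAT(1); and the usual minimal-non-face reduction shows a simplicial complex fails to be flag exactly when some iterated vertex link contains an empty triangle. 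This step is the only genuinely metric ingredient, and controlling local geodesics in piecewise spherical complexes --- in particular bounding the length of a shortest geodesic loop --- is where I expect the main obstacle to lie.

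Finally I would translate flagness of links into the stated cube condition. For a fixed $k$-face $F$ of ${\mathcal K}$, the faces of ${\mathcal K}$ containing $F$ of dimensions $k+1$, $k+2$, $k+3$ correspond bijectively to the vertices, edges and $2$-simplices of $\mathrm{Lk}(F)$, with two $(k+1)$-faces adjacent in $\mathrm{Lk}(F)$ exactly when they lie in a common $(k+2)$-face. So three $(k+2)$-faces sharing the $k$-face $F$ and having pairwise (necessarily distinct) $(k+1)$-face intersections form precisely an empty triangle of $\mathrm{Lk}(F)$, which is filled by a $(k+3)$-face iff that triangle bounds a $2$-simplex. Since the link of a face of $\mathrm{Lk}(v)$ is again the link in ${\mathcal K}$ of a larger face through $v$, the cube condition (ranging over all $k$ and all $F$) is equivalent to ``no iterated vertex link of any $\mathrm{Lk}(v)$ has an empty triangle,'' i.e.\ to every $\mathrm{Lk}(v)$ being flag, i.e.\ by the crux step to every $\mathrm{Lk}(v)$ being CAT(1), i.e.\ by the second step to ${\mathcal K}$ being non-positively curved. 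Combined with the first step, this yields both directions of the theorem.
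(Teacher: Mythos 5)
The paper does not prove this statement: it is quoted from Gromov \cite{Gr} as background, so there is no in-paper argument to compare yours against. Judged on its own, your outline is the standard proof (the one in Bridson--Haefliger): reduce CAT(0) to non-positive curvature plus simple connectivity via the metric Cartan--Hadamard theorem, reduce non-positive curvature to the CAT(1) property of vertex links via the link condition (correctly noting that the box side lengths $l_i$ do not change the all-right spherical structure of the links), invoke Gromov's lemma that an all-right piecewise spherical simplicial complex is CAT(1) if and only if it is flag, and translate flagness into the stated condition on triples of $(k+2)$-cubes via the minimal-non-face and iterated-link reduction. The structure is sound, and the last translation step --- the only place where one must check that the \emph{triple} condition really captures full flagness, since a priori flagness constrains cliques of all sizes --- is handled correctly by passing to links of faces. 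What remains unproved is exactly what you identify as the crux: the ``flag $\Rightarrow$ CAT(1)'' half of Gromov's lemma (ruling out closed local geodesics of length less than $2\pi$ in an all-right spherical complex), and, for infinite complexes, the completeness and geodesicity of the intrinsic $l_2$-metric (Bridson's theorem on complexes with finitely many shapes, which deserves a word of justification for box complexes whose edge lengths need not take finitely many values). So as a self-contained proof this is a correct skeleton with the hardest steps deferred to cited results; as a reading of the paper nothing is lost, since the paper itself uses the theorem as a black box.
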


The following relationship holds between
CAT(0) cubical complexes and median cubical complexes.

\begin{theorem} \label{cat0} \cite{Ch_CAT,Ro} Median cubical (box) complexes  and CAT(0) cubical (box) complexes
(both equipped with the $l_2$-metric) constitute the same
objects.
\end{theorem}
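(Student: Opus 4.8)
The plan is to prove the two inclusions separately and to handle first the cubical case; the box case then follows by the very same arguments, since Gromov's theorem \cite{Gr} stated above applies verbatim to box complexes and the median property of a box complex depends only on the underlying combinatorial cube complex (rescaling the common length of an equivalence class $\Theta_i$ changes neither CAT(0)-ness, by that theorem, nor the existence and location of medians).

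First I would show that if $G$ is a median graph then $\mathcal{K}(G)$, with the $l_2$-metric, satisfies the two conditions of Gromov's theorem. For \emph{simple connectivity}, in the finite case one argues by induction on the number of gated amalgamations provided by Theorem \ref{amalgam}: a hypercube yields a convex, hence contractible, complex, and if $G$ is the gated amalgam of $G_1$ and $G_2$ along $G_0$, then $G_0$ is convex (hence contractible) in each $G_i$, so by van Kampen $\mathcal{K}(G)=\mathcal{K}(G_1)\cup_{\mathcal{K}(G_0)}\mathcal{K}(G_2)$ is simply connected. For an arbitrary median graph one writes $\mathcal{K}(G)$ as the direct limit of the complexes of its finite convex subgraphs (each finite and median, since the convex hull of a finite set in a median graph is finite), and simple connectivity passes to the limit. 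For the combinatorial (link) condition, recall that the cubes of $\mathcal{K}(G)$ are exactly the families of pairwise crossing $\Theta$-classes realized at a common vertex. Given three $(k+2)$-cubes $Q_1,Q_2,Q_3$ sharing a $k$-cube $Q$ and pairwise sharing $(k+1)$-cubes, one records which $\Theta$-classes are common to all of them and which are private to each $Q_i$, and reduces to the basic fact that three pairwise crossing $\Theta$-classes of a median graph span a $3$-cube; that fact follows from the median property, by picking a vertex in the intersection of three suitable halfspaces and taking iterated medians to produce the missing corner. The $k+3$ classes involved are then pairwise crossing and span a $(k+3)$-cube containing $Q_1,Q_2,Q_3$, so Gromov's condition holds and $\mathcal{K}(G)$ is CAT(0).

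Conversely, I would show that the $1$-skeleton $G(\mathcal{K})$ of a CAT(0) cubical complex $\mathcal{K}$ is a median graph. The tool is the hyperplane structure: each hyperplane (the union of the midcubes of $\mathcal{K}$ parallel to a fixed edge direction) is an embedded, two-sided CAT(0) cubical complex and separates $\mathcal{K}$ into two \emph{convex} halfspaces. It follows that $G(\mathcal{K})$ is an isometric subgraph of a hypercube, with $\Theta$-classes precisely the edge sets dual to the hyperplanes, and that the graph distance between two vertices equals the number of hyperplanes separating them. Given three vertices $x,y,z$, let $m$ be the candidate vertex lying, for every hyperplane $H$, on the side of $H$ containing at least two of $x,y,z$. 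To see that $m$ is an actual vertex one checks that this family of ``majority'' halfspaces is pairwise intersecting and then invokes the Helly property for halfspaces of a CAT(0) cubical complex; the resulting vertex is then readily identified as the unique median of $x,y,z$. Hence $G(\mathcal{K})$ is median, which together with Theorems \ref{median_domain} and \ref{amalgam} completes the identification.

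The main obstacle is the Helly-type statement used in both directions: that pairwise crossing $\Theta$-classes (respectively, pairwise intersecting halfspaces) admit the expected common corner. In the median direction this is obtained directly from the median operation, and in the CAT(0) direction it rests on the careful combinatorics of hyperplanes — that hyperplanes are embedded, do not self-intersect or self-osculate, and bound convex halfspaces, and that these halfspaces satisfy the Helly property. Everything else amounts to bookkeeping with equivalence classes of $\Theta$ and with gated amalgams.
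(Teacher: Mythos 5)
The paper offers no proof of this statement at all: Theorem~\ref{cat0} is quoted from \cite{Ch_CAT,Ro}, the only comment being that the proof in \cite{Ch_CAT} is self-contained and allows one to \emph{derive} properties of CAT(0) cubical complexes --- notably Sageev's theorem that hyperplanes are embedded and bound convex halfspaces --- from known facts about median graphs. Your sketch is a correct outline of a standard argument, and your median-to-CAT(0) direction (simple connectivity by induction on the gated amalgams of Theorem~\ref{amalgam} plus van Kampen, a direct limit over finite convex subgraphs in the infinite case, and Gromov's link condition reduced to the fact that pairwise crossing $\Theta$-classes realized at a common vertex span a cube) matches the cited proof in spirit. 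The genuine difference is in the converse direction: you take as input the full hyperplane package for CAT(0) cubical complexes (embeddedness, two-sidedness, convexity of halfspaces, the Helly property), i.e.\ essentially Sageev's theorem \cite{Sageev}, and then run the majority-vote/ultrafilter argument to produce medians; the route in \cite{Ch_CAT} goes the other way, deducing the median property of the $1$-skeleton directly from simple connectivity and the link condition by a local-to-global argument and only then obtaining the hyperplane facts as corollaries. Both routes are legitimate since Sageev proved his theorem independently, but yours imports a nontrivial external input exactly where the paper advertises a self-contained derivation, so if you want your proof to stand alone you must either prove the Helly property for halfspaces from the CAT(0) geometry or replace that step as in \cite{Ch_CAT}.
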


The proof of this theorem given in \cite{Ch_CAT} is self-contained and allows to derive some properties of
CAT(0) cubical complexes from known results about median graphs. In particular, a fundamental result of Sageev \cite{Sa}
that each hyperplane of a CAT(0) cubical complex $\mathcal K$ does not self-intersect and partition $\mathcal K$  in exactly
two parts is a consequence of the fact that each equivalence class $\Theta_i$ of the median graph $G({\mathcal K})$ defines a convex split $\{ A_i,B_i\}.$
A {\it hyperplane} $H_i$ associated to $\Theta_i$ is the cubical complex whose 1-skeleton is the graph in which the middles (baricenters)  of the
edges of $\Theta_i$ are the vertices and
two such vertices are adjacent if they are middles of two opposite edges of a square of $\mathcal K$.  The {\it carrier} $N(H_i)$ in $\mathcal K$
of a hyperplane $H_i$  is the union of all cubes of $\mathcal K$ crossed by $H_i,$ i.e. the union of all cubes having an
edge in the equivalence class $\Theta_i$. In \cite{Ha}, Hagen introduced and investigated in depth the important concept of a contact
graph of a CAT(0) cubical complex $\mathcal K$. According to \cite{Ha}, the contact graph $\Gamma=\Gamma({\mathcal K})=\Gamma(G({\mathcal K}))$ of
$\mathcal K$ is a graph having the  hyperplanes (or the  equivalence classes of $\Theta$)
as vertices and two hyperplanes $H_i$ and $H_j$ are
adjacent in $\Gamma$ (notation $H_i\coll H_j$) if and only if the carriers $N(H_i)$ and $N(H_j)$ intersect. It was noticed in \cite{Ha}
that if $H_i\coll H_j,$ then the hyperplanes $H_i$ and $H_j$ either {\it cross} (in which case the equivalence classes $\Theta_i$ and $\Theta_j$ cross)
or {\it osculate} (in which case there exist two edges $e\in \Theta_i$ and $e'\in \Theta_j$ sharing a common endpoint and not belonging to a common square). Analogously to the equality between the
clique-number of the orthogonality graph of an event structure and the degree of the event structure, the clique number $\omega(\Gamma)$ of the
contact graph equals to the maximum degree of the graph $G({\mathcal K})$ of $\mathcal K$.

Due to the established bijections between  event structures and their domains, between domains
and pointed median graphs, and between  median graphs and CAT(0) cubical complexes, one can view the orthogonality graph of an event
structure and the crossing graph of a median graph as subgraphs of the contact graph of the associated CAT(0) cubical complex.
Namely, the {\it crossing graph} $\Gamma_{\#}=\Gamma_{\#}(G)=\Gamma_{\#}({\mathcal K}(G))$
of a median graph $G$ (or of the associated cubical complex ${\mathcal K}(G)$) has the hyperplanes of
${\mathcal K}(G)$ (or the equivalence classes of $\Theta$) as vertices and the pairs of crossing hyperplanes as edges.
Now, let $G_v$ be a pointed median graph obtained from $G$. As we noticed already (and this follows easily from the definition of the halfspaces), all edges of any equivalence class $\Theta_i$ of $G$ are directed in $G_v$ from $A_i$ to $B_i,$ i.e., if $xy\in \Theta_i$ and $v,x\in A_i,y\in B_i,$
then $xy$ is directed from $x$ to $y.$ The {\it pointed  contact graph} $\Gamma_v=\Gamma_v(G)=\Gamma_{v}({\mathcal K}(G))$ of $G$ has the set of hyperplanes of ${\mathcal K}(G)$ (or the equivalence classes of $\Theta$) as vertices and two hyperplanes $H_i$ and $H_j$ are adjacent if and only if either they cross or they osculate in two directed edges $e\in \Theta_i$ and $e'\in \Theta_j$ with a common origin (one can view $\Gamma_v$ as the orthogonality graph of the event structure having the pointed median graph $G_v$ as a domain). Since the relation $\Theta$ is transitive, in any coloring of the edges of the pointed median graph $G_v$ satisfying the  determinism and concurrency conditions, all edges of an equivalence class $\Theta_i$  have the same color,  two crossing equivalence classes have different colors, and two edges with common origin have different colors. Hence, the colorings of edges of $G_v$ are in bijection with the colorings of the pointed contact graph $\Gamma_v$.  On the other hand, if we color the equivalence classes of $\Theta$ so that this is a coloring of the crossing graph $\Gamma_{\#}$ of $G,$ then this
corresponds to a coloring of edges of $G$ using the concurrency rule only: the edges of each square of $G$ are colored in two colors  with opposite edges having the same
color. By Proposition 1 of \cite{BaChEp_ramified}, coloring  $\Gamma_{\#}$ in $n$ colors is equivalent to an isometric embedding of $G$ into the
Cartesian product of $n$ trees.

One can ask whether the {\it chromatic number of each of the graphs $\Gamma, \Gamma_{\#},$ or $\Gamma_v$ is bounded by a function of its clique number}. In case of the pointed contact graph $\Gamma_v$, this is exactly the nice labeling problem. In case of the contact graph $\Gamma$, this question was raised by Hagen in the first version of \cite{Ha}.  On the other
hand, it is well-known \cite{BaVdV} (see also Proposition 2.17 of \cite{Ha})  that any graph can be realized as the crossing graph of a median graph. Since there exists triangle-free graphs with arbitrarily high chromatic numbers, the chromatic number of $\Gamma_{\#}$ cannot be bounded by its clique number. Nevertheless, M. Sageev (personal communication from M. Hagen) and, independently, the author of the present note asked whether the {\it chromatic number of $\Gamma_{\#}=\Gamma_{\#}(G)$ is bounded by a function of the maximum degree of the median graph $G$, i.e., if a median graph $G$ with bounded degrees of vertices can be isometrically embedded into a bounded number of trees.}. In the next section, we will answer in the negative the first question about the graphs $\Gamma$ and $\Gamma_v$. In a forthcoming paper with Hagen, we will modify this example to answer in the negative the last question about $\Gamma_{\#}.$

\section{The counterexample}

Our counterexample to the labeling conjecture of \cite{RoTh} is based on examples of Burling of box hypergraphs with clique number 2 and arbitrarily  large chromatic numbers.
Parallelepipeds in ${\mathbb R}^3$ whose sides are parallel to the coordinate axes are called {\it 3-dimensional boxes}. Given a (finite) collection ${\mathcal B}$
of 3-dimensional boxes, its clique number $\omega({\mathcal B})$ is the maximum number of pairwise intersecting boxes of
$\mathcal B$ and its chromatic number $\chi({\mathcal B})$ is the minimum number of colors into which we can color the boxes of $\mathcal B$ in such a way that any
pair of intersecting boxes is colored in different colors (i.e., $\chi({\mathcal B})$ is the chromatic number of the intersection graph of the family $\mathcal B$).
In his PhD thesis \cite{Bu}, for each integer $n>0$, Burling constructed a collection of axis-parallel boxes ${\mathcal B}(n)$ with clique number $\omega({\mathcal B}(n))=2$
and chromatic number $\chi({\mathcal B}(n))>n$ (a full description of this construction is available in the survey paper by Gy\'arf\'as \cite{Gy}).

Let $B_0$ be a box of ${\mathbb R}^3.$ Suppose without loss of generality that one corner of $B_0$ is the origin of coordinates of ${\mathbb R}^3$ and that
$B_0$ is located in the first octant of ${\mathbb R}^3.$ Suppose that $B_0$ is subdivided into smaller boxes (called {\it elementary cells}) using a family of planes parallel to the three coordinate hyperplanes. This subdivision of $B_0$ defines a box complex as well as a cubical complex (if we scale all length of edges of the resulting boxes to 1). We
denote both these complexes by ${\mathcal K}$  and by $G=G({\mathcal K})$ their 1-skeleton. Notice that if $B_0$ is subdivided by $k_1-2$ planes parallel to the $xy$-plane,
$k_2-2$ planes parallel to the $yz$-plane, and $k_3-2$ planes parallel to the $xz$-plane, then $G$ is isomorphic to the  $k_1\times k_2\times k_3$ grid, and therefore is a median graph ($\mathcal K$ is a CAT(0) cubical complex because its underlying space is the box $B_0$).

Let ${\mathcal B}=\{ B_1,\ldots,B_m\}$ be a box hypergraph  such that the eight corners of each box $B_i, i=1,\ldots,m,$ are vertices of the grid $G.$ In this case, we say that the box hypergraph $\mathcal B$ is {\it cell-represented} by the box complex ${\mathcal K}$ because  each box
$B_i$ is the union of elementary cells of $\mathcal K$.  Let ${\mathcal K}_i$
be the subcomplex of $\mathcal K$ consisting of all elementary cells included in $B_i$ and let $G_i=G({\mathcal K}_i)$ be its underlying graph. Note that $G_i$ is also a 3-dimensional grid.
Hence $G_i$ is a convex  (and therefore gated) subgraph of  $G$.

Now, we define a {\it lifting procedure} taking as an input the box complex $\mathcal K$, its graph $G$, and a box hypergraph ${\mathcal B}$ cell-represented by $\mathcal K,$ and giving rise to a 4-dimensional
CAT(0) box  complex $\widetilde{\mathcal K}$ and its underlying median graph $\widetilde{G}=G(\widetilde{\mathcal K}).$ The complex $\widetilde{\mathcal K}$ is realized in the $(m+3)$-dimensional space ${\mathbb R}^{m+3}$. Suppose that the 3-dimensional space in which we defined  the box complex $\mathcal K$ is the subspace of ${\mathbb R}^{m+3}$ defined by the last 3 coordinates, i.e., each point $p$ of $B_0$ has the coordinates $(0,\ldots,0,p_{m+1},p_{m+2},p_{m+3})$ with $p_{m+1},p_{m+2},p_{m+3}\ge 0.$ Then for each box $B_i, i=1,\ldots,m,$ we can define the numbers $0\le a'_i<a''_i,0\le b'_i<b''_i, 0\le c'_i<c''_i$ so that $B_i$ is the set of all points $p=(0,\ldots,0,p_{m+1},p_{m+2},p_{m+3})\in {\mathbb R}^{m+3}$ such that $p_{m+1}\in [a'_i,a''_i],$ $p_{m+2}\in [b'_i,b''_i],$ and $p_{m+3}\in [c'_i,c''_i].$ Let  $\widetilde{B}_i$ be the 4-dimensional box which is the Cartesian product of $B_i$ with the unit segment $s_i$ of the $i$th coordinate-axis of ${\mathbb R}^{m+3}$:   $\widetilde{B}_i$ consists of all points $p=(p_1,\ldots,p_m,p_{m+1},p_{m+2},p_{m+3})\in {\mathbb R}^{m+3}$ such that $p_j=0$ for each $1\le j\le m, j\ne i,$ $p_i\in [0,1],$ $p_{m+1}\in [a'_i,a''_i], p_{m+2}\in [b'_i,b''_i],$ and $p_{m+3}\in [c'_i,c''_i].$ Let $\widetilde{\mathcal B}=\{ \widetilde{B}_i: B_i\in {\mathcal B}\}$ be the resulting box hypergraph in ${\mathbb R}^{m+3}$. Each elementary cell $C$ of $\mathcal K$ gives rise to a 4-dimensional box $\widetilde{C}_i=C\times s_i$ for each 3-dimensional box $B_i\in {\mathcal B}$ containing $C.$ We refer to each $\widetilde{C}_i$ as a {\it lifted elementary box}. Denote by $\widetilde{\mathcal K}$ the box complex consisting of all elementary cells of $\mathcal K$ and of all lifted elementary cells. For each box $\widetilde{B}_i$, let   $\widetilde{{\mathcal K}}_i$ be the subcomplex of $\widetilde{\mathcal K}$ consisting of all lifted elementary cells $\widetilde{C}_i$ such that $C\subseteq B_i.$  Finally, let $\widetilde{G}=G(\widetilde{\mathcal K})$ and $\widetilde{G}_i=G(\widetilde{\mathcal K}_i)$ be the 1-skeletons of $\widetilde{\mathcal K}$  and $\widetilde{{\mathcal K}}_i,$ respectively. Notice also that each
$\widetilde{G}_i$ is a 4-dimensional rectangular grid because $\widetilde{G}_i$ is the Cartesian product $\widetilde{G}_i=G_i\times e_i$ of the grid  $G_i$ with an edge $e_i.$   Notice that the edges $e_1,\ldots,e_m$ are different because the unit segments $s_1,\ldots,s_m$ belong to different coordinate-axes of ${\mathbb R}^{m+3}.$

\begin{lemma} \label{median} $\widetilde{G}$ is a median graph.
\end{lemma}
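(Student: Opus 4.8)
The plan is to realize $\widetilde{G}$ as the result of finitely many gated amalgamations starting from the median graph $G$, and then to invoke Theorem~\ref{amalgam}. The point is that $\widetilde{G}=G\cup\widetilde{G}_1\cup\cdots\cup\widetilde{G}_m$, that $G$ and each $\widetilde{G}_i=G_i\times e_i$ are grids and hence median graphs, that each $G_i$ is a convex (so gated) subgraph of $G$, and — the geometric heart of the matter — that the vertices of $\widetilde{G}_i$ lying off $G$ (those whose $i$th coordinate equals $1$) have all of their $\widetilde{G}$-neighbours inside $\widetilde{G}_i$. The last fact holds because no box of $\widetilde{\mathcal K}$ has two of the lifting coordinates $1,\dots,m$ simultaneously positive: moving along an edge from such a vertex either lowers the $i$th coordinate to $0$ (landing in $G_i$) or changes one of the last three coordinates (staying inside $\widetilde{G}_i$). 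Consequently $\widetilde{G}_i$ meets $G\cup\bigcup_{j\ne i}\widetilde{G}_j$ exactly in $G_i$, and $G_i$ separates $\widetilde{G}_i\setminus G$ from the rest of $\widetilde{G}$.

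Then I would set $G^{(0)}:=G$ and $G^{(k)}:=G^{(k-1)}\cup\widetilde{G}_k$ for $k=1,\dots,m$, so that $G^{(m)}=\widetilde{G}$, and prove by induction on $k$ that $G^{(k)}$ is a median graph and that $G$ is gated in $G^{(k)}$. For the inductive step I would verify that $G^{(k)}$ is the gated amalgam of $G^{(k-1)}$ and $\widetilde{G}_k$ with $G_0:=G^{(k-1)}\cap\widetilde{G}_k=G_k$: first, $G_k$ is gated in $\widetilde{G}_k=G_k\times e_k$, being a layer of a Cartesian product; second, $G_k$ is gated in $G^{(k-1)}$, since $G_k$ is gated in $G$, $G$ is gated in $G^{(k-1)}$ by the induction hypothesis, and a gated subgraph of a gated subgraph is gated; third, since $\widetilde{G}_k\setminus G^{(k-1)}=\widetilde{G}_k\setminus G$, the neighbourhood observation above shows that $G_k$ separates every vertex of $\widetilde{G}_k\setminus G^{(k-1)}$ from every vertex of $G^{(k-1)}\setminus\widetilde{G}_k$. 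Theorem~\ref{amalgam} then yields that $G^{(k)}$ is median, and because $G^{(k-1)}$ is gated in the amalgam $G^{(k)}$ and $G$ is gated in $G^{(k-1)}$, transitivity of gatedness keeps the invariant alive. Taking $k=m$ concludes the argument.

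The bookkeeping is routine: the two "library" facts (a layer of a Cartesian product of graphs is gated; gatedness is transitive along nested gated subgraphs), the identification $G^{(k-1)}\cap\widetilde{G}_k=G_k$, and the fact that grids are median. The one step that genuinely uses the specifics of the lifting procedure, and the one I expect to be the main obstacle to state cleanly, is the separation condition: one must rule out any "shortcut" in $\widetilde{G}$ joining the raised copy $\widetilde{G}_k\setminus G$ to another raised grid $\widetilde{G}_j$ without passing through the base $G_k$, and this is exactly where the choice to lift the boxes $\widetilde B_1,\dots,\widetilde B_m$ into pairwise distinct coordinate directions of $\mathbb{R}^{m+3}$ is essential. (An alternative route would be to check directly that $\widetilde{\mathcal K}$ is simply connected — it deformation retracts onto the box $B_0$ — and satisfies Gromov's link condition, and then apply Theorem~\ref{cat0}; but the amalgamation argument above is shorter and is the reason Theorem~\ref{amalgam} was recalled.)
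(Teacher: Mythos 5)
Your proof is correct and follows essentially the same route as the paper's: an induction that builds $\widetilde{G}$ from $G$ by successively gated-amalgamating the grids $\widetilde{G}_i$ along $G_i$, using gatedness of $G_i$ in $\widetilde{G}_i$ and (via transitivity through $G$) in the partial amalgam, and then invoking Theorem~\ref{amalgam}. You are in fact somewhat more explicit than the paper about the separation condition --- that the distinct lifting coordinates prevent shortcuts between raised grids --- which the paper leaves implicit in the phrase ``$\widetilde{G}$ is obtained by amalgamating $\widetilde{G}'$ and $\widetilde{G}_i$ along their common subgraph $G_i$.''
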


\begin{proof} We will show that $\widetilde{G}$ is obtained from the median graphs $G,\widetilde{G}_1,\ldots,\widetilde{G}_{m-1},\widetilde{G}_m$ by applying successive gated amalgamations.  We proceed by induction on the size of the box hypergraph ${\mathcal B}.$ If ${\mathcal B}=\varnothing,$ then the result is immediate because $\widetilde{G}$ coincide with $G.$ Now suppose by induction assumption that our assertion holds for the box hypergraph ${\mathcal B}'=\{ B_1,\ldots,B_{m-1}\}.$ Let $\widetilde{G}'$ be the median graph which is the 1-skeleton of the box complex $\widetilde{\mathcal K}'$ defined for the box hypergraph $\widetilde{{\mathcal B}}'=\{ \widetilde{B}_1,\ldots, \widetilde{B}_{m-1}\}.$ Notice that $\widetilde{G}$ is obtained by amalgamating the median graphs $\widetilde{G}'$ and $\widetilde{G_i}$ along their common subgraph $G_i.$ Since $G_i$ is a grid, $G_i$  is a gated subgraph of the grid $\widetilde{G}_i$. Analogously,  since $G_i$ is a gated subgraph of $G$
and the grid $G$ is a gated subgraph of each of the median graphs resulting from previous gated amalgams, $G_i$ is a gated subgraph of $\widetilde{G}'$. Thus $\widetilde{G}$ is a gated amalgam of two median graphs $\widetilde{G}_i$ and $\widetilde{G}',$ whence $\widetilde{G}$ is a median graph by Theorem \ref{amalgam}.
\end{proof}

Let $\Theta_i, i=1,\ldots,m,$ be the equivalence class of the edges of the median graph $\widetilde{G}$ defined by $e_i,$ i.e., $\Theta_i$ consists of all edges of $\widetilde{G}$ (or of $\widetilde{\mathcal K}$) which are parallel to the $i$th coordinate-axis of ${\mathbb R}^{m+3}.$   Each edge of $\Theta_i$ has one endpoint in the grid $G_i$ of the box $B_i$ and, vice versa, each vertex of $G_i$ is an endpoint of exactly one edge of $\Theta_i$. $\Theta$ comprises also other equivalence classes, namely the equivalence classes of the grid $G$ augmented by the edges of $\widetilde{G}_1,\ldots,\widetilde{G}_m$ parallel to them.

Let $\alpha$ be the corner of $B_0$ which is identified with the origin of coordinates of ${\mathbb R}^{m+3}$ and let $\widetilde{G}_{\alpha}$ be the median graph $\widetilde{G}$ pointed at $\alpha$. Then each edge $e$ of the equivalence class $\Theta_i$ will be directed in $\widetilde{G}_{\alpha}$ away from $G_i,$ i.e., the endpoint of $e$ from $G_i$ will be the origin of $e$. The edges $e=uv$ of the remaining equivalence classes will be directed in $\widetilde{G}_{\alpha}$ from a vertex with smaller coordinates to a vertex with larger coordinates (notice that, if fact, $u$ and $v$ will differ in exactly one of the last three  coordinates).  Let $\Gamma_{\alpha}(\widetilde{G})$ be the pointed contact graph of $\widetilde{G}_{\alpha}$.

\begin{lemma} \label{crossing} Any two equivalence classes $\Theta_i$ and $\Theta_j$ do not cross.
\end{lemma}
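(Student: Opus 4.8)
The plan is to argue by contradiction, tracking coordinates in $\mathbb{R}^{m+3}$. Suppose some $4$-cycle $C=x_1x_2x_3x_4x_1$ of $\widetilde{G}$ witnesses the crossing of $\Theta_i$ and $\Theta_j$ for distinct $i,j\in\{1,\ldots,m\}$, say with $x_1x_2,x_3x_4\in\Theta_i$ and $x_2x_3,x_4x_1\in\Theta_j$. First I would record two elementary consequences of the lifting construction. \textbf{(i)} Every edge of $\Theta_i$ joins two vertices that agree in all coordinates except the $i$th, and whose $i$th coordinates are $0$ and $1$: indeed by construction the edges of $\Theta_i$ are exactly the segments parallel to the $i$th coordinate axis, each having one endpoint in $G_i$ (where the $i$th coordinate is $0$) and the other its lift along the unit segment $s_i=[0,1]$; the same holds for $\Theta_j$. \textbf{(ii)} For every vertex $v=(v_1,\ldots,v_{m+3})$ of $\widetilde{G}$, at most one of $v_1,\ldots,v_m$ is nonzero. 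This holds because $V(\widetilde{G})=V(G)\cup\bigcup_{k=1}^{m}V(\widetilde{G}_k)$, every vertex of $G$ has $v_1=\cdots=v_m=0$, and every vertex of $\widetilde{G}_k=G_k\times e_k$ has $v_\ell=0$ for all $\ell\le m$ with $\ell\ne k$.

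Next I would run the bookkeeping around $C$. By \textbf{(i)} applied to the edges $x_1x_2$ and $x_3x_4$, the $i$th coordinate is constant on the edges $x_2x_3$ and $x_4x_1$ and flips between $0$ and $1$ on $x_1x_2$ and on $x_3x_4$; hence the $i$th coordinates of $x_1,\ldots,x_4$ are $\epsilon,1-\epsilon,1-\epsilon,\epsilon$ for some $\epsilon\in\{0,1\}$, so exactly one of the two pairs $\{x_1,x_4\}$, $\{x_2,x_3\}$ has $i$th coordinate equal to $1$. Symmetrically, applying \textbf{(i)} to $x_2x_3$ and $x_4x_1$, the $j$th coordinates of $x_1,\ldots,x_4$ are $\delta,\delta,1-\delta,1-\delta$ for some $\delta\in\{0,1\}$, so exactly one of the two pairs $\{x_1,x_2\}$, $\{x_3,x_4\}$ has $j$th coordinate equal to $1$. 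Since the vertices of $C$ are distinct, the "$i$th coordinate $=1$" pair and the "$j$th coordinate $=1$" pair meet in precisely one vertex of $C$; that vertex has both its $i$th and its $j$th coordinate equal to $1$, and as $i\ne j$ this contradicts \textbf{(ii)}. This contradiction establishes the lemma.

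There is essentially no computation to push through; the only point requiring a little care is verifying that the two pairs always intersect in exactly one vertex, which is immediate once one writes out the four cases for $(\epsilon,\delta)$. For intuition one may also phrase the whole argument geometrically: in the median graph $\widetilde{G}$ every $4$-cycle spans a $2$-cube of $\widetilde{\mathcal K}$, and a square with edge directions $e_i$ and $e_j$ would be a face of some cell of $\widetilde{\mathcal K}$; but no elementary cell (which spans only the last three coordinate directions) nor lifted elementary cell $\widetilde{C}_k=C\times s_k$ (which, among $e_1,\ldots,e_m$, spans only $e_k$) possesses faces in two distinct directions $e_i,e_j$ with $i,j\le m$, so no such square exists.
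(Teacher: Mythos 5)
Your proof is correct and rests on the same underlying observation as the paper's: in the ambient ${\mathbb R}^{m+3}$ the $\Theta_i$-edges live where all lifted coordinates other than the $i$th vanish, so the $i$th and $j$th coordinates can never be simultaneously active, which forbids a common square. The paper phrases this at the level of hyperplanes (the hyperplanes $H_i$ and $H_j$ lie in disjoint affine $3$-planes $\Pi_i$ and $\Pi_j$, hence cannot intersect), while you carry out the equivalent bookkeeping on the vertices of a putative $4$-cycle; both are valid and essentially the same argument.
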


\begin{proof} The hyperplane $H_i$ of the CAT(0) box complex $\widetilde{\mathcal K}$ defined by $\Theta_i$ lies in the (3-dimensional) plane $\Pi_i$ of ${\mathbb R}^{m+3}$ described by the equations $x_k=0$ if $1\le k\le m, k\ne i,$ and $x_i=\frac{1}{2}.$ Analogously, the hyperplane $H_j$ defined by $\Theta_j$ lies in the plane $\Pi_j$ of ${\mathbb R}^{m+3}$ described by the equations $x_k=0$ if $1\le k\le m, k\ne j,$ and $x_j=\frac{1}{2}.$ Since $\Pi_i$ and $\Pi_j$ are disjoint, the hyperplanes $H_i$ and $H_j$ are disjoint as well, thus the equivalence classes $\Theta_i$ and $\Theta_j$ do not cross.
\end{proof}

\begin{lemma} \label{intersection} Two equivalence classes $\Theta_i$ and $\Theta_j$ are adjacent in  $\Gamma_{\alpha}(\widetilde{G})$ if and only if the grids $G_i$ and $G_j$ intersect and if and only if the boxes $B_i$ and $B_j$ intersect.
\end{lemma}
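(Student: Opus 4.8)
The plan is to reduce adjacency in the pointed contact graph $\Gamma_{\alpha}(\widetilde{G})$ to a combinatorial condition on the base grid, and then to translate that condition into the geometry of the boxes. By definition, $\Theta_i$ and $\Theta_j$ are adjacent in $\Gamma_{\alpha}(\widetilde{G})$ exactly when they cross or when they osculate in two directed edges $e\in\Theta_i$, $e'\in\Theta_j$ with a common origin. By Lemma \ref{crossing} the first alternative never occurs, so I would first show that adjacency is equivalent to the mere existence of edges $e\in\Theta_i$ and $e'\in\Theta_j$ with the same origin. Indeed, if two such edges lay in a common square of $\widetilde{\mathcal K}$, then the two edges of that square parallel to $e$ would lie in $\Theta_i$ and the two parallel to $e'$ in $\Theta_j$, which would make $\Theta_i$ and $\Theta_j$ cross, contradicting Lemma \ref{crossing}; hence a common origin automatically forces osculation.

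Next I would use the description of $\Theta_i$ recalled just before the lemma: each edge of $\Theta_i$ is directed in $\widetilde{G}_{\alpha}$ away from $G_i$, so its origin is its unique endpoint lying in $G_i$, and the map sending an edge of $\Theta_i$ to that endpoint is a bijection onto $V(G_i)$ (and likewise for $\Theta_j$). Therefore the set of origins of edges of $\Theta_i$ is exactly $V(G_i)$, and so $\Theta_i$ and $\Theta_j$ are adjacent in $\Gamma_{\alpha}(\widetilde{G})$ if and only if $V(G_i)\cap V(G_j)\neq\varnothing$. Since $G_i$ is a connected grid, having a common vertex is the same as intersecting as subgraphs, which yields the first claimed equivalence.

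It remains to match this with $B_i\cap B_j\neq\varnothing$. Here I would use that $V(G_i)$ is precisely the set of vertices of $G$ contained in the box $B_i$ (because $B_i$ is a union of elementary cells of $\mathcal K$). One direction is then immediate: a common vertex of $G_i$ and $G_j$ is a point of $B_i\cap B_j$. For the converse, writing $B_i=[a'_i,a''_i]\times[b'_i,b''_i]\times[c'_i,c''_i]$ and $B_j$ similarly, if the two boxes meet then the point $(\max(a'_i,a'_j),\max(b'_i,b'_j),\max(c'_i,c'_j))$ lies in $B_i\cap B_j$, and since every corner of $B_i$ and of $B_j$ is a vertex of $G$, each of its three coordinates is the coordinate of a subdivision hyperplane of $\mathcal K$, so the point is a vertex of $G$, hence a common vertex of $G_i$ and $G_j$. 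The one genuinely delicate point — and the main obstacle — is exactly this last step: one must be sure that a possibly lower-dimensional intersection $B_i\cap B_j$ still carries a grid vertex, which is where the hypothesis that all eight corners of each box are grid vertices is essential.
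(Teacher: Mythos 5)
Your proof is correct and follows essentially the same route as the paper: reduce adjacency in $\Gamma_{\alpha}(\widetilde{G})$ to the existence of edges $e'\in\Theta_i$ and $e''\in\Theta_j$ with a common origin (crossing being excluded by Lemma~\ref{crossing}), identify the set of origins of $\Theta_i$-edges with $V(G_i)$, and then relate the existence of a common grid vertex to $B_i\cap B_j\neq\varnothing$. The only difference is cosmetic: in the converse direction the paper produces a common grid vertex by taking intersecting elementary cells $C'\subseteq B_i$ and $C''\subseteq B_j$ (which must share a corner of $G$), whereas you construct it explicitly as $(\max(a'_i,a'_j),\max(b'_i,b'_j),\max(c'_i,c'_j))$; both arguments rest on the fact that the subdividing planes pass through every corner of every box.
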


\begin{proof} First suppose that $\Theta_i$ and $\Theta_j$ are adjacent in  $\Gamma_{\alpha}(\widetilde{G})$. By Lemma \ref{crossing} and the definition of edges of $\Gamma_{\alpha}(\widetilde{G}),$ there exist two directed edges $e'\in \Theta_i$ and $e''\in \Theta_j$ having the same origin. Since the origin of $e'$ belongs to $G_i$ and the origin of $e''$ belongs to $G_j,$ we conclude that this common origin belongs  to $G_i\cap G_j$ and therefore to $B_i\cap B_j.$

Conversely, suppose that $B_i$ and $B_j$ intersect. Since each of $B_i$ and $B_j$ is constituted by elementary cells of ${\mathcal K}$, we conclude that there exist two elementary cells $C'\subseteq B_i$ and $C''\subseteq B_j$ which intersect. Necessarily $C'$ and $C''$ share a common vertex $v$ of $G$. Hence $v$ is a vertex of both grids $G_i$ and  $G_j$. Now, from our construction follows that $v$ is the origin of an edge $e'$ of $\Theta_i$ and of an edge $e''$ of $\Theta_j,$ whence the equivalence classes   $\Theta_i$ and $\Theta_j$ are adjacent in  $\Gamma_{\alpha}(\widetilde{G})$.
\end{proof}

\begin{lemma} \label{degree} If the clique number of the box hypergraph ${\mathcal B}$ is $\omega=\omega({\mathcal B})$, then the out-degree of a vertex in the pointed median graph
$\widetilde{G}_{\alpha}$ is $\omega+3.$ In particular, the maximum degree of $\widetilde{G}$ is at most $\omega+6$ and the clique number of the pointed contact
graph $\Gamma_{\alpha}(\widetilde{G})$ is $\omega+3.$
\end{lemma}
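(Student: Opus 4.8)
The plan is to read off the out-degrees and the degrees of the vertices of $\widetilde{G}$ directly from the construction, and then to deduce the clique number of $\Gamma_{\alpha}(\widetilde{G})$ from the bijection recalled in Section 2. First I would record the shape of the vertex set. Since each $\widetilde{G}_i=G_i\times e_i$ is glued to $G$ along the copy of $G_i$ sitting at the ``$0$''-end of the edge $e_i$, the vertex set of $\widetilde{G}$ is the disjoint union of $V(G)$ and the ``top'' copies $G_i'$ of the grids $G_i$ sitting at the ``$1$''-end of $e_i$, for $i=1,\ldots,m$; these top copies are pairwise disjoint because the segments $s_i$ lie on distinct coordinate-axes of ${\mathbb R}^{m+3}$. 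I would also keep in mind the orientation rules for $\widetilde{G}_{\alpha}$ recalled just before the statement: inside the grids every edge is directed toward the larger coordinate, while every edge of $\Theta_i$ is directed from its endpoint in $G_i$ to its endpoint in $G_i'$.

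Next I would bound the out-degree of an arbitrary vertex $v$ of $\widetilde{G}_{\alpha}$. If $v\in G_i'$, then all neighbours of $v$ lie in $\widetilde{G}_i=G_i\times e_i$; the $\Theta_i$-edge at $v$ is directed into $G_i$, i.e.\ toward the basepoint $\alpha$, hence it is incoming, so the out-edges at $v$ are among the at most three grid-edges of the copy of $G_i$ through $v$, and $v$ has out-degree at most $3$. If $v\in V(G)$, the out-edges at $v$ are the grid-edges of $G$ at $v$ directed toward a larger coordinate (at most three of them), together with, for each index $i$ with $v\in G_i$, the $\Theta_i$-edge joining $v$ to $(v,1)\in G_i'$; these two families are disjoint and exhaust the out-edges at $v$. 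The boxes $B_i$ with $v\in G_i$ all contain the single point $v$, hence pairwise intersect, so there are at most $\omega$ of them. Consequently no vertex of $\widetilde{G}_{\alpha}$ has out-degree exceeding $\omega+3$.

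To see that $\omega+3$ is attained (I may assume $\mathcal B\neq\varnothing$, the case $\mathcal B=\varnothing$ being immediate since then $\widetilde{G}=G$), I would fix pairwise intersecting boxes $B_{i_1},\ldots,B_{i_{\omega}}$. Projecting to each coordinate axis, the corresponding closed intervals pairwise intersect, so in each axis the largest of their left endpoints does not exceed any of their right endpoints; let $v^*$ be the point whose three coordinates are these three maxima. Then $v^*$ is a vertex of $G$ (each coordinate of $v^*$ is a corner-coordinate of a cell-represented box, hence a grid coordinate) and $v^*$ lies in every $B_{i_l}$. Moreover each coordinate of $v^*$ equals the left endpoint of one of these boxes, which is strictly smaller than that box's right endpoint and hence strictly smaller than the extent of $B_0$ in the corresponding direction; therefore $v^*$ has in $G$ a neighbour with a larger coordinate in each of the three directions. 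Thus the out-degree of $v^*$ in $\widetilde{G}_{\alpha}$ is exactly $3+\omega$, and the maximum out-degree in $\widetilde{G}_{\alpha}$ equals $\omega+3$. The only non-routine ingredient is this twofold use of the Helly property of intervals on a line: once to bound by $\omega$ the number of grids $G_i$ through a grid vertex, and once to produce a grid vertex lying in $\omega$ prescribed pairwise intersecting boxes and still able to grow in all three directions.

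Finally, for the degree bound I would add in the in-degrees. A vertex $v\in V(G)$ satisfies $\deg_{\widetilde{G}}(v)=\deg_G(v)+|\{\,i:v\in G_i\,\}|\le 6+\omega$, since a vertex of a $3$-dimensional grid has degree at most $6$ and, as above, at most $\omega$ of the grids $G_i$ pass through $v$. A vertex $v\in G_i'$ has degree in $\widetilde{G}$ equal to one plus its degree in the grid $G_i'$, hence at most $7$, and $7\le 6+\omega$ because the mere existence of such a vertex forces $\mathcal B\neq\varnothing$ and so $\omega\ge 1$. Hence the maximum degree of $\widetilde{G}$ is at most $\omega+6$. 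For the clique number of $\Gamma_{\alpha}(\widetilde{G})$ I would invoke Section 2: by Theorem \ref{median_domain} the pointed median graph $\widetilde{G}_{\alpha}$ is the Hasse diagram of the domain of an event structure whose orthogonality graph is $\Gamma_{\alpha}(\widetilde{G})$, and by Lemma 2.11 of \cite{Sa} the clique number of this orthogonality graph equals the maximum out-degree of a vertex of the Hasse diagram, which the previous paragraphs show to be $\omega+3$.
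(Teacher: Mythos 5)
Your proof is correct and follows essentially the same counting argument as the paper: out-edges at a grid vertex $v$ split into at most three grid edges plus one $\Theta_i$-edge per box $B_i$ through $v$, and the latter are at most $\omega$ in number because those boxes pairwise intersect. The paper reaches the same bound slightly less directly (it takes a maximal pairwise-intersecting subfamily of $\mathcal B$ and invokes the Helly property for convex subgraphs of median graphs to produce a common vertex), and it does not bother to prove attainment of $\omega+3$, which you do via the one-dimensional Helly argument; that extra step is sound and is, if anything, needed to justify the ``is $\omega+3$'' (rather than ``is at most $\omega+3$'') phrasing of the statement. One small slip: for $v\in G_i'$ you describe the $\Theta_i$-edge at $v$ as ``directed into $G_i$, toward the basepoint,'' whereas by the orientation rule it is directed \emph{away} from $G_i$ and away from $\alpha$; fortunately the conclusion you draw from it --- that this edge is incoming at $v$, so $v$ has out-degree at most $3$ --- is the correct one, so nothing downstream is affected.
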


\begin{proof}  Consider a maximal by inclusion collection ${\mathcal B}_0$ of $k$ pairwise intersecting boxes of $\mathcal B$. By Lemma \ref{intersection}, for any two boxes $B_i,B_j\in {\mathcal B}_0,$ the grids $G_i$ and $G_j$ intersect. From Helly property for convex sets of median graphs, we conclude that all grids $G_i$ with $B_i\in {\mathcal B}_0$ share  a common vertex $v.$ The out-degree of $v$ in the pointed median graph $\widetilde{G}_{\alpha}$ is at most $k+3$ because $v$ is the origin of one edge from each of the $k$ equivalence classes $\Theta_i$ with $B_i\in {\mathcal B}_0$ as well as the origin of at most three outgoing edges in the pointed grid $G_{\alpha}$. Since $k\le \omega,$ the out-degree of $v$ is at most $\omega+3$ showing that the out-degree in $\widetilde{G}_{\alpha}$ of any vertex of the grid $G$ is at most $\omega+3.$ On the other hand, the out-degree of each vertex $z$ of $\widetilde{G}$ not belonging to $G$ is equal to the out-degree in $G_{\alpha}$  of its twin in $G$ and therefore is at most 3. Since any vertex of $\widetilde{G}$ can have at most three incoming edges, we conclude that the maximum degree of a vertex of the undirected graph  $\widetilde{G}$ is at most $\omega+6.$
\end{proof}

\begin{lemma} \label{colouring} $\chi(\Gamma(\widetilde{G}))\ge \chi(\Gamma_{\alpha}(\widetilde{G}))\ge \chi({\mathcal B}).$
\end{lemma}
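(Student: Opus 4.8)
The plan is to prove the two inequalities separately, each by transferring a proper coloring, first along an inclusion of graphs and then along the correspondence between equivalence classes and boxes. For $\chi(\Gamma(\widetilde{G}))\ge\chi(\Gamma_{\alpha}(\widetilde{G}))$, I would first observe that $\Gamma_{\alpha}(\widetilde{G})$ is a spanning subgraph of the contact graph $\Gamma(\widetilde{G})$. Both graphs have the equivalence classes $\{\Theta_i:i\in I\}$ of $\Theta$ as vertices, so it suffices to show that every edge of $\Gamma_{\alpha}(\widetilde{G})$ is an edge of $\Gamma(\widetilde{G})$. If $\Theta_i$ and $\Theta_j$ are adjacent in $\Gamma_{\alpha}(\widetilde{G})$, then by definition the hyperplanes $H_i$ and $H_j$ either cross or osculate in two directed edges $e\in\Theta_i$ and $e'\in\Theta_j$ with a common origin. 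In the crossing case $H_i$ and $H_j$ meet, so $N(H_i)\cap N(H_j)\ne\varnothing$; in the osculating case $e\subseteq N(H_i)$ and $e'\subseteq N(H_j)$, hence the common endpoint of $e$ and $e'$ lies in $N(H_i)\cap N(H_j)$. In either case $H_i\coll H_j$, so $\Theta_i$ and $\Theta_j$ are adjacent in $\Gamma(\widetilde{G})$. Consequently any proper coloring of $\Gamma(\widetilde{G})$ restricts to a proper coloring of $\Gamma_{\alpha}(\widetilde{G})$, which yields the first inequality.

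For $\chi(\Gamma_{\alpha}(\widetilde{G}))\ge\chi({\mathcal B})$, I would take an arbitrary proper coloring $c$ of $\Gamma_{\alpha}(\widetilde{G})$ using exactly $\chi(\Gamma_{\alpha}(\widetilde{G}))$ colors and color each box $B_i$ of $\mathcal B$ by $c(\Theta_i)$, where $\Theta_i$, $i=1,\ldots,m$, is the equivalence class associated with $B_i$ in the lifting construction. To verify that this is a proper coloring of the intersection graph of $\mathcal B$, suppose $B_i$ and $B_j$ intersect. By Lemma~\ref{intersection} the classes $\Theta_i$ and $\Theta_j$ are then adjacent in $\Gamma_{\alpha}(\widetilde{G})$, so $c(\Theta_i)\ne c(\Theta_j)$, and thus $B_i$ and $B_j$ receive distinct colors. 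Hence $\mathcal B$ admits a proper coloring with $\chi(\Gamma_{\alpha}(\widetilde{G}))$ colors, i.e., $\chi({\mathcal B})\le\chi(\Gamma_{\alpha}(\widetilde{G}))$.

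There is essentially no obstacle here: the lemma is a routine pair of ``restrict a coloring'' reductions. The only point requiring a moment's care is the containment of edge sets of $\Gamma_{\alpha}(\widetilde{G})$ in $\Gamma(\widetilde{G})$ — that osculation in two edges sharing a vertex already forces the two carriers to meet at that vertex — and this is immediate from the definition of a carrier recalled in Section~3. Together with the degree bound of Lemma~\ref{degree} (so that $\omega({\mathcal B})=2$ keeps the out-degree equal to $5$) and Burling's construction with $\chi({\mathcal B}(n))>n$, this lemma is exactly what produces the promised counterexample.
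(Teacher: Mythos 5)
Your proposal is correct and follows essentially the same route as the paper: the first inequality comes from $\Gamma_{\alpha}(\widetilde{G})$ being a subgraph of $\Gamma(\widetilde{G})$ (which the paper treats as obvious and you verify in more detail from the definitions of crossing, osculation, and carriers), and the second comes from restricting a proper coloring to the classes $\Theta_1,\ldots,\Theta_m$ and invoking Lemma~\ref{intersection}. No gaps.
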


\begin{proof} The first inequality is obvious because $\Gamma_{\alpha}(\widetilde{G})$ is a subgraph of $\Gamma(\widetilde{G})$. By Lemma \ref{intersection}, any coloring of $\Gamma(\widetilde{G})$
restricted to the equivalence classes $\Theta_i, i=1,\ldots,m,$ provides a coloring of the box hypergraph $\mathcal B$.
\end{proof}

Now we will apply our construction to Burling's examples. For each integer $n>0$, let  ${\mathcal B}(n)$  be a box hypergraph with clique number 2 and chromatic number $\chi({\mathcal B}(n))>n$ as defined in \cite{Bu,Gy}.
Suppose that  ${\mathcal B}(n)$ is drawn in the first open octant of ${\mathbb R}^3$.   Let $B_0(n)$ be an additional axis-parallel box having one corner in the origin $\alpha(n)$ of coordinates of ${\mathbb R}^3$ and containing all boxes of ${\mathcal B}(n)$ ($B_0(n)$ will play the role of the box $B_0$). Let $\beta(n)$ be the corner of $B_0(n)$ opposite to $\alpha(n).$
Now, subdivide $B_0(n)$ into elementary cells by drawing the three axis-parallel planes trough each of eight  corners of each box $B_i$ of ${\mathcal B}(n).$ Denote the resulting grid by $G(n)$ and the resulting box complex (subdividing $B_0(n)$) by ${\mathcal K}(n).$ Then the box hypergraph ${\mathcal B}(n)$ is cell-represented by ${\mathcal K}(n).$ Denote by $\widetilde{\mathcal K}(n)$ the box complex obtained by applying our lifting procedure to ${\mathcal K}(n)$ and ${\mathcal B}(n).$ Let  $\widetilde{G}(n)=G(\widetilde{\mathcal K}(n))$  be the 1-skeleton of $\widetilde{\mathcal K}(n)$ and let $\widetilde{G}_{\alpha(n)}(n)$ be the median graph $\widetilde{G}(n)$ pointed at $\alpha(n)$. Since $\omega({\mathcal B}(n))=2$, from Lemma \ref{degree} we conclude that the maximum out-degree of a vertex in the pointed median graph $\widetilde{G}_{\alpha_n}(n)$ is at most 5. On the other hand, since $\chi({\mathcal B}(n))>n,$ from Lemma \ref{colouring} we conclude that the chromatic numbers of the contact graph of $\widetilde{G}(n)$ and of the pointed contact graph of $\widetilde{G}_{\alpha_n}(n)$ are larger than $n.$ Summarizing, we obtain the following conclusion:

\begin{proposition} \label{G(n)} For any $n>0$, there exist a median graph $\widetilde{G}(n)$ of maximum degree 8 and a pointed median graph $\widetilde{G}_{\alpha_n}(n)$ of maximum out-degree 5 such that any coloring of the contact graph of $\widetilde{G}(n)$ and of the pointed contact graph of $\widetilde{G}_{\alpha_n}(n)$ requires more than $n$ colors. In particular, any nice labeling of the event structure  ${\mathcal E}_{\alpha(n)}$ (of degree 5) whose domain is $\widetilde{G}_{\alpha_n}(n)$ requires more than $n$ labels.
\end{proposition}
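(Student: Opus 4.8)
The plan is to instantiate the general construction developed in Lemmas \ref{median}--\ref{colouring} with Burling's box hypergraphs and then translate the resulting coloring lower bound into a statement about event structures via Theorem \ref{median_domain}. First I would fix $n>0$ and take ${\mathcal B}(n)$ to be a finite collection of axis-parallel boxes in ${\mathbb R}^3$ with $\omega({\mathcal B}(n))=2$ and $\chi({\mathcal B}(n))>n$, whose existence is Burling's theorem \cite{Bu,Gy}. After translating so that ${\mathcal B}(n)$ lies in the open first octant, I would enclose it in an axis-parallel box $B_0(n)$ with a corner $\alpha(n)$ at the origin, and subdivide $B_0(n)$ into elementary cells by passing the three axis-parallel planes through each corner of each box of ${\mathcal B}(n)$. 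This yields the three-dimensional grid $G(n)$ and the cubical complex ${\mathcal K}(n)$, and by construction ${\mathcal B}(n)$ is cell-represented by ${\mathcal K}(n)$, so all hypotheses needed to run the lifting procedure are in place.

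Next I would apply the lifting procedure to ${\mathcal K}(n)$ and ${\mathcal B}(n)$, obtaining the box complex $\widetilde{\mathcal K}(n)$, its $1$-skeleton $\widetilde{G}(n)=G(\widetilde{\mathcal K}(n))$, and the pointed median graph $\widetilde{G}_{\alpha(n)}(n)$. Lemma \ref{median} gives that $\widetilde{G}(n)$ is a median graph. Lemma \ref{degree}, applied with $\omega=\omega({\mathcal B}(n))=2$, gives that the maximum out-degree of $\widetilde{G}_{\alpha(n)}(n)$ is $\omega+3=5$, that the maximum degree of $\widetilde{G}(n)$ is at most $\omega+6=8$, and that the clique number of the pointed contact graph $\Gamma_{\alpha(n)}(\widetilde{G}(n))$ is $\omega+3=5$. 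Lemma \ref{colouring}, applied with ${\mathcal B}={\mathcal B}(n)$, then yields $\chi(\Gamma(\widetilde{G}(n)))\ge \chi(\Gamma_{\alpha(n)}(\widetilde{G}(n)))\ge \chi({\mathcal B}(n))>n$, which is exactly the claimed coloring lower bound for both the contact graph and the pointed contact graph.

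Finally, to obtain the event-structure statement I would invoke Theorem \ref{median_domain}: since $\widetilde{G}(n)$ is median and $\alpha(n)$ is a basepoint, $\widetilde{G}_{\alpha(n)}(n)$ is (isomorphic to) the Hasse diagram of the domain of the event structure ${\mathcal E}_{\alpha(n)}$ defined from the splits of $\widetilde{G}(n)$ as in Section 2. By the discussion in Section 3, the orthogonality graph ${\mathcal G}({\mathcal E}_{\alpha(n)})$ coincides with the pointed contact graph $\Gamma_{\alpha(n)}(\widetilde{G}(n))$, so its proper colorings are in bijection with the nice labelings of ${\mathcal E}_{\alpha(n)}$, and the degree of ${\mathcal E}_{\alpha(n)}$ equals $\omega(\Gamma_{\alpha(n)}(\widetilde{G}(n)))=5$; hence any nice labeling of ${\mathcal E}_{\alpha(n)}$ uses more than $n$ labels. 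The proof is thus a direct assembly of the earlier lemmas; the only point that genuinely requires going back to the definitions rather than citing a prior result is this last identification of ${\mathcal G}({\mathcal E}_{\alpha(n)})$ with $\Gamma_{\alpha(n)}(\widetilde{G}(n))$ — i.e.\ checking that ``$\Theta_i$ separates the basepoint from $\Theta_j$'' and ``$\Theta_i\smile\Theta_j$'' translate precisely into the crossing/osculation-with-common-origin description of the pointed contact graph — so that is where I would be most careful.
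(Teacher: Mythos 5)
Your proposal is correct and follows essentially the same route as the paper: instantiate the lifting construction with Burling's family ${\mathcal B}(n)$, then invoke Lemma \ref{median} for medianness, Lemma \ref{degree} with $\omega=2$ for the degree and out-degree bounds, and Lemma \ref{colouring} for the chromatic lower bound, finishing with the Barth\'elemy--Constantin correspondence to phrase the result for the event structure ${\mathcal E}_{\alpha(n)}$. Your extra care about identifying ${\mathcal G}({\mathcal E}_{\alpha(n)})$ with $\Gamma_{\alpha(n)}(\widetilde{G}(n))$ is a point the paper disposes of in Section 3 rather than in the proof itself, but it is the right place to be careful.
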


To present a counterexample to the conjecture of Rozoy and Thiagarajan, we consider the following infinite median graph $\widetilde{G}^*$ whose blocks (2-connected components) are the graphs $\widetilde{G}(1),\widetilde{G}(2),\ldots$. Recall that each graph $\widetilde{G}(n)$ has two distinguished vertices $\alpha(n)$ and $\beta(n)$ which are opposite corners of the box $B_0(n).$ To construct $\widetilde{G}^*,$ for each $n>1,$ we identify the vertex $\beta(n-1)$ of $\widetilde{G}(n-1)$ with the vertex $\alpha(n)$ of  $\widetilde{G}(n)$ and obtain an infinite in one direction chain of blocks. Notice that the identified vertices $\beta(n-1)=\alpha(n)$ are exactly the articulation vertices of $\widetilde{G}^*.$ Obviously $\widetilde{G}^*$ is a median graph because each its block is median.  Now suppose that  $\widetilde{G}^*$ is pointed at the vertex $\alpha=\alpha(1)$ and let  $\widetilde{G}^*_{\alpha}$ be the resulting pointed median graph. Notice that each edge $e$ of  $\widetilde{G}^*$ is oriented in $\widetilde{G}^*_{\alpha}$ in the same way as in the orientation $\widetilde{G}_{\alpha_n}(n)$ of the unique block $\widetilde{G}(n)$ containing $e.$ On the other hand, the out-degree in  $\widetilde{G}^*_{\alpha}$ of each vertex $v$ belonging to a unique block $\widetilde{G}(n)$ is the same as in $\widetilde{G}_{\alpha_n}(n)$ while the out-degree of each articulation point is 3, whence the maximum out-degree  of $\widetilde{G}^*_{\alpha}$ is also 5. The pointed contact graph $\Gamma(\widetilde{G}^*_{\alpha})$ of $\widetilde{G}^*_{\alpha}$ is the disjoint union of the pointed contact graphs  $\Gamma(\widetilde{G}_{\alpha_n}(n))$. From Proposition \ref{G(n)} we conclude that the chromatic number of $\Gamma(\widetilde{G}^*_{\alpha})$ is infinite.  As a consequence, we established the  following main result of this note:

\begin{theorem} \label{theorem} There exists a pointed median graph $\widetilde{G}^*_{\alpha}$ of maximum out-degree 5 such that the chromatic number of its pointed contact graph $\Gamma(\widetilde{G}^*_{\alpha})$ is infinite. In particular, any nice labeling of the event structure  ${\mathcal E}_{\alpha}$ (of degree 5) whose domain is $\widetilde{G}^*_{\alpha}$ requires an infinite number of labels.
\end{theorem}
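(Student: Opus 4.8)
The plan is to glue the finite pieces $\widetilde{G}(n)$ supplied by Proposition~\ref{G(n)} into a single infinite pointed median graph and then to check that the gluing neither inflates out-degrees nor creates new edges of the pointed contact graph. First I would recall that each $\widetilde{G}(n)$ carries two opposite corners $\alpha(n),\beta(n)$ of its bounding box $B_0(n)$, and build $\widetilde{G}^*$ by identifying, for every $n>1$, the vertex $\beta(n-1)$ of $\widetilde{G}(n-1)$ with the vertex $\alpha(n)$ of $\widetilde{G}(n)$, obtaining a one-ended chain of blocks. Each $\widetilde{G}(n)$ is median by Lemma~\ref{median}; since a one-point amalgam is a gated amalgam, every finite initial segment of the chain is median by Theorem~\ref{amalgam}, and as $\widetilde{G}^*$ is the increasing union of these segments, each convex in the next, $\widetilde{G}^*$ is a median graph whose blocks are exactly the $\widetilde{G}(n)$ and whose articulation vertices are the identified points $\beta(n-1)=\alpha(n)$. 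Pointing at $\alpha:=\alpha(1)$ yields the pointed median graph $\widetilde{G}^*_{\alpha}$.

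Next I would check that the orientation of $\widetilde{G}^*_{\alpha}$ is block-local. For a vertex $y$ in the block $\widetilde{G}(n)$, every shortest $(\alpha,y)$-path must run through the articulation vertices $\alpha(2),\dots,\alpha(n)$ in order, so $d_{\widetilde{G}^*}(\alpha,y)$ equals a constant depending only on $n$ plus $d_{\widetilde{G}(n)}(\alpha(n),y)$; hence an edge of $\widetilde{G}(n)$ receives in $\widetilde{G}^*_{\alpha}$ exactly the orientation it has in $\widetilde{G}_{\alpha_n}(n)$. This bounds the out-degrees: a vertex interior to a block keeps its out-degree from $\widetilde{G}_{\alpha_n}(n)$, which is at most $5$ by Proposition~\ref{G(n)}. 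An articulation vertex $c=\beta(n-1)=\alpha(n)$ is a sink of $\widetilde{G}_{\alpha_{n-1}}(n-1)$: it is the far corner of the grid $G(n-1)$, so all three grid edges there point inward, and, choosing $B_0(n-1)$ slightly generously so that its far corner avoids every box of $\mathcal B(n-1)$, it lies on no lifted edge of $\widetilde{G}(n-1)$. Symmetrically, inside $\widetilde{G}(n)$ the vertex $c$ is the basepoint-corner of the grid $G(n)$ and, the boxes of $\mathcal B(n)$ lying in the open octant, it lies on no lifted edge there either, so its out-degree inside $\widetilde{G}(n)$ is exactly $3$. Thus $0+3=3\le 5$ at articulation vertices, and $\widetilde{G}^*_{\alpha}$ has maximum out-degree $5$.

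The heart of the argument is the decomposition $\Gamma(\widetilde{G}^*_{\alpha})=\bigsqcup_{n>0}\Gamma(\widetilde{G}_{\alpha_n}(n))$ as a disjoint union. Since every $4$-cycle of $\widetilde{G}^*$ lies in a single block (two blocks share at most one vertex), so does every $\Theta$-equivalence class, and therefore two equivalence classes from distinct blocks cannot cross; they also cannot osculate in two directed edges with a common origin, for such an origin would have to be an articulation vertex $c$, while a directed edge of $\Theta_i\subseteq\widetilde{G}(n-1)$ with origin $c$ would contradict the fact just established that $c$ is a sink of $\widetilde{G}_{\alpha_{n-1}}(n-1)$. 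Hence $\Gamma(\widetilde{G}^*_{\alpha})$ has no edge joining distinct blocks, and being the disjoint union of the graphs $\Gamma(\widetilde{G}_{\alpha_n}(n))$ its chromatic number is $\sup_{n}\chi(\Gamma(\widetilde{G}_{\alpha_n}(n)))=\infty$, since $\chi(\Gamma(\widetilde{G}_{\alpha_n}(n)))>n$ for all $n$ by Proposition~\ref{G(n)}.

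Finally I would read the statement back through Theorem~\ref{median_domain} and the reformulations recalled above: there is an event structure $\mathcal E_{\alpha}$ whose domain has Hasse diagram $\widetilde{G}^*_{\alpha}$; its orthogonality graph is $\Gamma(\widetilde{G}^*_{\alpha})$, its degree is the maximum out-degree of $\widetilde{G}^*_{\alpha}$, namely $5$, and its nice labelings are precisely the proper colorings of $\Gamma(\widetilde{G}^*_{\alpha})$; since the latter graph is not finitely colorable, $\mathcal E_{\alpha}$ admits no nice labeling with finitely many labels, refuting the conjecture of Rozoy and Thiagarajan. The step I expect to require the most care is the bookkeeping at the articulation vertices in the second and third paragraphs — pinning down that each articulation vertex is a sink of the preceding block, which simultaneously forbids cross-block osculation and keeps the out-degree there equal to $3$, and which rests on the harmless normalization that each bounding box $B_0(n)$ be large enough for its far corner to miss all boxes of $\mathcal B(n)$; the remaining steps are direct appeals to Theorems~\ref{amalgam} and~\ref{median_domain} and to Proposition~\ref{G(n)}.
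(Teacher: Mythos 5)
Your proof follows the paper's argument exactly: the same chain-of-blocks construction identifying $\beta(n-1)$ with $\alpha(n)$, the same observations that the orientation and the out-degrees of $\widetilde{G}^*_{\alpha}$ are block-local (with out-degree $3$ at articulation vertices) and that the pointed contact graph decomposes as the disjoint union of the $\Gamma(\widetilde{G}_{\alpha_n}(n))$, and the same appeal to Proposition~\ref{G(n)} to conclude. The extra care you take at the articulation vertices --- normalizing $B_0(n)$ so that its far corner misses every box of ${\mathcal B}(n)$, which makes each articulation vertex a sink of the preceding block and rules out cross-block osculation with a common origin --- is a detail the paper leaves implicit, but it is essentially the same proof.
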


\section*{Acknowledgements}

I would like to acknowledge Remi Morin for introducing to me the nice labeling conjecture and useful discussions, Luigi Santocanale for several discussions on this conjecture,
and Mark Hagen for our exchanges and collaboration on  the 2-dimensional case. This  work was supported in part by the ANR grant BLAN GGAA.

\end{document}